\documentclass[11pt,reqno]{amsart}

\usepackage{amsmath,amssymb,mathtools}
\usepackage[margin=1.15in]{geometry}
\usepackage{microtype,enumerate,comment}
\usepackage[hidelinks]{hyperref}
\usepackage[new]{old-arrows}

\newtheorem{theorem}{Theorem}[section]
\newtheorem{alphatheorem}{Theorem}

\newtheorem{proposition}[theorem]{Proposition}
\newtheorem{corollary}[theorem]{Corollary}
\newtheorem{lemma}[theorem]{Lemma}
\theoremstyle{definition}
\newtheorem{definition}[theorem]{Definition}
\theoremstyle{remark}

\newcommand{\Z}{\mathbb Z}
\newcommand{\Q}{\mathbb Q}
\newcommand{\Gm}{{\mathbb G}_{\mathrm m}}
\newcommand{\OK}{\mathcal O_K}
\newcommand{\Tr}{\operatorname{Tr}}
\newcommand{\Nm}{\operatorname{N}}
\newcommand{\ord}{\operatorname{ord}}
\newcommand{\Res}{\operatorname{Res}}
\newcommand{\Gal}{\operatorname{Gal}}
\newcommand{\eps}{\varepsilon}
\newcommand{\leg}[2]{({\scriptstyle{\frac{#1}{#2}}})}

\def\pp{\mathfrak{p}}
\def\rad{\operatorname{rad}}
\def\ayz{\mathbf A^1_{\Z}}
\def\qD{\sqrt{D}}

\begin{document}

\title{SIC dimension towers via cyclotomic polynomials}

\author{Gary McConnell}

\thanks{Controlled Quantum Dynamics Theory Group, Imperial College,
London SW7 2AZ, UK.
Email: \href{mailto:gimcc@ic.ac.uk}{\nolinkurl{gimcc@ic.ac.uk}}.}

\begin{abstract}
We prove a structure theorem for the dimension towers~$\{d_k(D)\}_{k\geq0}$ 
which arise in the number-theoretic formulation of 
Zauner's SIC-POVM conjecture over a real quadratic
field~$K=\Q(\qD)$.  If~$\eps$ denotes the first totally positive power 
of a fundamental unit of~$K$ and $t_k = \eps^k + \eps^{-k}$ 
the trace of its $k$-th power, then the SIC 
dimension tower~$\{d_k=1+t_k\}_{k\geq0}$ 
is the level~$m=3$ row of an infinite two-dimensional
cyclotomic array~$\{\Psi_m(t_k)\}_{m\geq1,k\geq0}$ 
attached to~$K$, while the auxiliary factors~$(d_k+1)$ and
$(d_k-3)$ are its ramified levels $m=2$ and $m=1$.  
Here~$\Psi_m$ denotes the minimal polynomial of~
$\zeta_m+\zeta_m^{-1} = 2\cos{2\pi/m}$. 
This construction arose initially from an attempt to formulate
relations among SIC dimensions in $q$-algebraic terms. 

The central object is a single closed composite norm relation for the
two-parameter family $c_{m,k}=1-\zeta_m\eps^k$ 
over the cyclotomic field tower 
$\{K(\mu_m)\}_{m\geq1}$.  This framework sheds new light on 
the mod-$p$ analogue of Leopoldt's conjecture, 
by placing the central 3-symmetry of
Zauner's conjecture within a broader arithmetic context. 
Away from the primes dividing~$2mD$, the valuations
$v_p(\Psi_m(t_k))$ at every fixed level~$m$ are described exactly in
terms of a single local unit valuation, which is then related, through
the~$p$-adic class number formula, to the corresponding $p$-adic
$L$-value.
\end{abstract}

\makeatletter
\let\saved@makefntext\@makefntext
\def\@makefntext{\noindent\@makefnmark}
\makeatother

\maketitle

\makeatletter
\let\@makefntext\saved@makefntext
\makeatother

\section{Introduction}
Let~$D>1$ be a square--free integer, and 
write~$K=\Q(\qD)$ for the corresponding real quadratic field. 
$\Delta = D$ or~$4D$ will denote its discriminant.
Let $u$ be a fundamental unit of~$K$: to fix ideas 
once and for all we assume that~$u>1$ 
at the real embedding for which~$\qD>0$. 
Let $p>2$ be a prime number. 
Meaningful calculations---let alone the theory---for 
the problem variously referred to~\cite{McC24} as 
\emph{Wieferich} or \emph{Wall--Sun--Sun} primes 
for real quadratic fields, or 
the mod-$p$ analogue of Leopoldt's conjecture~\cite{BGKK},
are widely regarded as intractable with current 
methods~\cite{KatzWieferich,silverman}.

However the number-theoretic formulation~\cite{AFMY,Kopp,KLagSIC} 
of Zauner's SIC-POVM existence conjecture~\cite{Zauner}---which associates
to a dimension $d\geq4$ the class 
\( D \equiv (d+1)(d-3) \bmod {\Q^\times}^2 \)---
shows these in a new light~\cite{McC24}. 
In its simplest form, it is the observation that any dimension 
with few divisors and a ``large'' prime power factor $p^2$ or higher will 
force~$K$ to be a non-$p$-rational field \emph{via local $p$-divisibility of the unit 
group}~\cite{grasNpR}, as opposed to the route via the Hilbert class group. 
Appendix~\ref{app:wss-tables} gives exhaustive tables for
$1<D<50$ and $p<1.7415\times10^{13}$, while
Appendix~\ref{app:null-tests} summarizes the associated computational
tests (see also~\cite{primegrid,WSSwiki}).

The initial clue came from the divisibility of the two auxiliary
factors: with $q=\eps^k$,
$d_k-3=q^{-1}(q-1)^2$ and $d_k+1=q^{-1}(q+1)^2$.
Studying $1-\zeta_mq$ with $q$ indeterminate, before specializing
to $q=\eps^k$, identifies these as the levels $m=1,2$ of a general
cyclotomic construction and makes both the higher factorizations and
the substitutions $q\mapsto q^N$ transparent. 
This Laurent-polynomial~$q$-algebra viewpoint both 
motivated and substantially simplified the argument.

In this paper we place the familiar SIC dimension tower~\cite{AFMY} inside an
infinite integer array attached to $K$ via a cyclotomic construction.  
At $k=0$ the elements~$c_{m,0}=1-\zeta_m$ 
are the classical cyclotomic values of Kubert's 
\emph{universal ordinary distribution}~\cite{Kubert79}.
For $k\geq1$, the same cyclotomic distribution relation persists
after twisting by $\varepsilon^k$, accompanied in the $k$-variable by
the Chebyshev shift $t_k\mapsto t_{Nk}=2T_N(t_k/2)$. In the concluding
remarks in \S\ref{sec:conclusions}
we show that, after adjoining compatible Kummer division points, the
prime-power relation becomes a genuinely two-directional norm relation
in the Kummer--cyclotomic setting, in what is usually referred to 
as a \emph{false Tate extension}~\cite[\S3.3]{HV03}.

\subsection{Cyclotomic norm relations}\label{subsec:main-theorem}
Henceforth we write~$\tau$ for the non-trivial automorphism of~$K/\Q$. 
Let~$\eps$ denote the first totally positive power of $u$: 
\[
 \eps:=
 \begin{cases}
  u,&\Nm_{K/\Q}(u)=1,\\
  u^2,&\Nm_{K/\Q}(u)=-1;
 \end{cases}
\]
so in particular~$\Nm_{K/\Q}(\eps)=1$ and~$\tau(\eps)=\eps^{-1}$.  
Thus the powers of $\eps$ have integer traces
\begin{equation}\label{eq:trace-sequence}
 t_k:=\Tr_{K/\Q}(\eps^k)=\eps^k+\eps^{-k},
 \qquad k\geq0.
\end{equation}
The point $t_0=2$ is the natural initial
term of the sequence, for every~$D$. 
In the SIC context, we recall that for~$k\geq0$
we define the $k$-th dimension in the tower~\cite{AFMY} 
above $K$ to be~$d_k := d_k(D) := t_k+1 = \eps^k + \eps^{-k} + 1$; 
consequently~$d_0(D)=3$ is the base point for every~$D$.

Fix once and for all an algebraic closure $\overline{\Q}$ of $\Q$ 
and a power-compatible system of primitive roots of
unity~$\{\zeta_m\in\overline{\Q}:m\geq1\}$, meaning that
$\zeta_{mn}^{\,n}=\zeta_m$ for all $m,n\geq1$.  
Loosely, to fix ideas, we may regard these as simply 
$\zeta_m = e^\frac{2 \pi i}{m}$, and therefore 
the ubiquitous trace quantity $\zeta_m+\zeta_m^{-1} = 2\cos \frac{2\pi }{m}$.
Write $\mu_m$ for the group of $m$-th roots of unity.
For $m\geq1$ and $k\geq0$, define the following natural two-index
family of elements over~$K$:
\begin{equation}\label{eq:cmk}
 c_{m,k}:=1-\zeta_m\eps^k\in K(\mu_m).
\end{equation}
Note that $c_{m,k} = 0$ precisely when $(m,k) = (1,0)$.
For any~$a\in(\Z/m\Z)^\times$, let $\sigma_a$ denote the Galois
automorphism of $\Q(\mu_m)/\Q$ which sends $\zeta_m$ to
$\zeta_m^a$. 
Since $K$ is quadratic, for any $N\geq1$ the intersection
$K\cap\Q(\mu_N)$ is either $\Q$ or $K$; the cyclotomic norm statements
below concern the generic case $K\cap\Q(\mu_N)=\Q$.  The minor
modifications required when $K\subseteq\Q(\mu_m)$ are recorded in
Appendix~\ref{app:intersection}. 
Whenever $K\cap\Q(\mu_m)=\Q$, we use the same notation~$\sigma_a$
for the induced automorphism of $K(\mu_m)/K$ which fixes~$K$. 

Let~$\Phi_m(X)$ denote the $m$-th cyclotomic polynomial, and let
$\Psi_m(X)\in\Z[X]$ be the minimal polynomial over~$\Q$ of
$\zeta_m+\zeta_m^{-1}$.  Write $\varphi$ and $\mu$ for Euler's
totient function and the M\"obius function.  For $m\geq3$, complex
conjugation pairs the primitive $m$-th roots without fixed points; in
particular~\cite{Washington}, $\Psi_m$ has degree $\varphi(m)/2$ and
its roots are $\zeta_m^a+\zeta_m^{-a}$, with $a$ taken modulo the
pairing $a\sim-a$ in $(\Z/m\Z)^\times$.  Pairing the corresponding
roots of $\Phi_m$ gives
\begin{equation}\label{eq:pal}
 \Phi_m(X)=X^{\varphi(m)/2}\Psi_m(X+X^{-1})
 \qquad(m\geq3).
\end{equation}
Let $P_t(Y):=Y^2-tY+1$.  If $z+z^{-1}=t$, then $z,z^{-1}$ are the
roots of $P_t$, and therefore
\begin{equation}\label{eq:resultant}
 \Res_Y\bigl(P_t(Y),\Phi_m(Y)\bigr)
 =\Phi_m(z)\Phi_m(z^{-1})=\Psi_m(t)^2
 \qquad(m\geq3).
\end{equation}
Thus $\Psi_m$ may equivalently be defined to be 
the unique monic square root of this
resultant. It is the exact order-$m$ cyclotomic factor after passage
from the multiplicative coordinate $z$ to the trace coordinate
$t=z+z^{-1}$.  At the bottom levels the same minimal-polynomial
convention gives $\Psi_1(t)=t-2$ and $\Psi_2(t)=t+2$; these will be
the two branch levels of the quotient in~\S\ref{sec:geometry}.

\begin{definition}\label{def:level}
With $D$ understood, the expression~\emph{level}~$m$ will mean the row
$\{\Psi_m(t_k)\}_{k\geq0}$ of the array.  Since $\Psi_3(t)=t+1$,
level~$3$ is the SIC tower~$\{d_k\}_{k\geq0}$. 
Every entry of the array is a rational integer, since $t_k\in\Z$ and
$\Psi_m\in\Z[X]$. 
\end{definition}

For Theorem~\ref{thm:main} and the identities derived from it in
\S\ref{sec:norms}, given integers~$m,n\geq1$, let $n=bn'$, where
$(n',m)=1$ and every prime divisor of $b$ divides $m$.
Write $\rad{N}$ for the radical of an integer~$N$.
Whenever we have an integer $e$ such that $(e,m)=1$, we write $e^{-1}$ for the inverse of $e$ modulo $m$.
When $m=1$, every coefficient automorphism so indexed is understood to be the identity.

\begin{theorem}\label{thm:main}
Let $m,n\geq1$ and $k\geq0$, with $(m,k)\neq(1,0)$.  Write
$n=bn'$, where $(n',m)=1$ and every prime divisor of $b$ divides $m$,
and interpret $e^{-1}$ as above.  Assume that
$K\cap\Q(\mu_{mn})=\Q$.  Then
\begin{equation}\label{eq:composite-relation}
 \Nm_{K(\mu_{mn})/K(\mu_m)}\bigl(c_{mn,k}\bigr)
 =\prod_{e\mid\rad{n'}}
  \sigma_{e^{-1}}\bigl(c_{m,(n/e)k}\bigr)^{\mu(e)}.
\end{equation}
\end{theorem}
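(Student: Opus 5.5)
The plan is to compute the left-hand side as an explicit finite product and then sieve it with M\"obius inversion.

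\textbf{The Galois group.} Since $K\cap\Q(\mu_{mn})=\Q$, restriction identifies $\Gal(K(\mu_{mn})/K(\mu_m))$ with $\Gal(\Q(\mu_{mn})/\Q(\mu_m))$. The latter is the kernel of the reduction map $(\Z/mn\Z)^\times\to(\Z/m\Z)^\times$. Each such $\sigma_a$ fixes $\eps$. Hence, with $x:=\eps^k$,
\[
 \Nm_{K(\mu_{mn})/K(\mu_m)}(c_{mn,k})=\prod_{\substack{a\in(\Z/mn\Z)^\times\\ a\equiv1\ (m)}}\bigl(1-\zeta_{mn}^{a}x\bigr).
\]
I will prove the resulting statement as an identity of rational functions in an indeterminate $x$, and only then specialize to $x=\eps^k$. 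This is the $q$-algebra viewpoint advertised in the introduction.

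\textbf{The unrestricted product.} First drop the unit condition. The residues $a\bmod mn$ with $a\equiv1\pmod m$ are exactly $a=1+mj$ with $j\bmod n$. The corresponding roots of unity $\zeta_{mn}\omega$ run over $\omega\in\mu_n$. Using $\prod_{\omega\in\mu_n}(1-\omega y)=1-y^n$ and the compatibility $\zeta_{mn}^n=\zeta_m$, we get
\[
 \prod_{\substack{a\bmod mn\\ a\equiv1\ (m)}}(1-\zeta_{mn}^a x)=1-\zeta_m x^n .
\]

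\textbf{Which residues are non-units.} Since $a\equiv1\pmod m$, $a$ is automatically prime to $m$, and so also to every prime dividing $b$. Thus $a$ fails to be a unit mod $mn$ exactly when $\gcd(a,n')>1$. This is why only $e\mid\rad n'$ appears in the formula.

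\textbf{The M\"obius sieve.} For squarefree $e\mid n'$, collect the residues with $e\mid a$. Write $a=ea'$. Because $(e,m)=1$, the condition $a\equiv1\pmod m$ becomes $a'\equiv e^{-1}\pmod m$, with $a'$ running modulo $mn/e$. Fix a lift $c$ of $e^{-1}$. The same computation as above, now at level $mn/e$ and using $\zeta_{mn}^{e}=\zeta_{mn/e}$, gives
\[
 \prod_{\substack{a\equiv1\ (m)\\ e\mid a}}(1-\zeta_{mn}^a x)=\prod_{\omega\in\mu_{n/e}}\bigl(1-\zeta_{mn/e}^{c}\,\omega x\bigr)=1-\zeta_m^{c}x^{n/e}.
\]
The right-hand side depends only on $c\bmod m$. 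Applying M\"obius inclusion--exclusion over $e\mid\rad n'$ isolates the units:
\[
 \prod_{a\in\ker}(1-\zeta_{mn}^a x)=\prod_{e\mid\rad n'}\bigl(1-\zeta_m^{e^{-1}}x^{n/e}\bigr)^{\mu(e)} .
\]
Specializing $x=\eps^k$, and noting that $\sigma_{e^{-1}}$ fixes $K$, each factor equals $\sigma_{e^{-1}}(c_{m,(n/e)k})$. This gives \eqref{eq:composite-relation}.

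\textbf{Main obstacle.} I expect the main difficulty to be bookkeeping rather than ideas. There are two points to check.
\begin{itemize}
\item The sieve must be performed multiplicatively with no zero factors. Here the hypothesis $(m,k)\neq(1,0)$ is needed. For $k\geq1$, each $1-\zeta_m^{c}\eps^{kn/e}$ is nonzero because $|\eps|\neq1$ at a real place. For $k=0$ and $m>1$, each factor is $1-\zeta_m^{c}\neq0$ since $(c,m)=1$.
\item The residue classes $a\equiv1\pmod m$ with $e\mid a$ must correspond bijectively to the classes $a'\equiv e^{-1}\pmod m$ taken modulo $mn/e$. This is where the splitting $n=bn'$ and the condition $(e,m)=1$ are used.
\end{itemize}
The degenerate convention for $m=1$, where all $\sigma$'s are the identity, is consistent with this, since then $\zeta_1=1$.
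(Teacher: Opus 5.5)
Your proof is correct, but it takes a different route from the paper.

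\textbf{The paper's route.} The paper proves the polynomial identity (Lemma~\ref{lem:CompN}) by induction on $\Omega(n)$, climbing the tower one prime at a time. It uses the two prime-step identities of Lemma~\ref{lem:CycNId}: deepening when $\ell$ already divides $mn/\ell$, and adjoining when $n$ is square-free and prime to $m$. It combines these with transitivity of norms and with the equivariance \eqref{eq:norm-equivariance}, which pulls $\sigma_\lambda$ through the norm, where $\lambda$ reduces to $\ell^{-1}$ modulo $m$. It then specializes at $X=\eps^k$.

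\textbf{Your route.} You evaluate the norm in one step as the product over the kernel of $(\Z/mn\Z)^\times\to(\Z/m\Z)^\times$. The full coset $a\equiv1\pmod m$ gives $1-\zeta_m x^n$, and the sub-coset with $e\mid a$ gives $1-\zeta_m^{e^{-1}}x^{n/e}$. You then remove the non-units by inclusion--exclusion over $e\mid\rad n'$. This is the ``complete by the missing factor'' computation from the paper's proof of Lemma~\ref{lem:CycNId}(ii), done for all primes of $n'$ at once.

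\textbf{What each buys.} Your version avoids the induction, the transitivity and the equivariance bookkeeping. It also shows directly why only $\rad n'$ enters and where the twist $e^{-1}$ comes from, namely the congruence $ea'\equiv1\pmod m$. You are more explicit than the paper about why specializing a quotient at $x=\eps^k$ is legitimate, which is exactly where $(m,k)\neq(1,0)$ is needed. The paper's inductive route instead produces the prime-step relations of Corollary~\ref{cor:prime-step} as its natural building blocks. It also mirrors the iteration of $\Phi_M(X^\ell)$ identities used in Proposition~\ref{prop:master} and the Kummer--cyclotomic picture of \S\ref{sec:conclusions}.
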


We shall prove this in \S\ref{sec:norms}, by composing
elementary norm identities of cyclotomic polynomials which move one
prime at a time through the tower --- incrementing an $\ell$-power
level, or adjoining a new prime~$\ell$ to an existing level.  The
single-prime cases of the theorem itself are as follows. 

\begin{corollary}[Prime-step relations]\label{cor:prime-step}
Let $\ell$ be a prime and let $k\geq0$. 
\begin{enumerate}[(i)]
\item\label{cor:prime-step-deepening}
If $\ell$ is odd, then for every $s\geq1$,
\[
  \Nm_{K(\mu_{\ell^{s+1}})/K(\mu_{\ell^s})}
  \bigl(c_{\ell^{s+1},k}\bigr)
  =
  c_{\ell^s,\ell k}.
\]

\item\label{cor:prime-step-adjoining}
Let $m\geq1$ satisfy $(m,\ell)=1$ and assume that~$K\cap\Q(\mu_{m\ell})=\Q$. 
\begin{enumerate}[(a)]
\item\label{cor:prime-step-adjoining-general}
If $m\geq2$, then
\[
  \Nm_{K(\mu_{m\ell})/K(\mu_m)}
  \bigl(c_{m\ell,k}\bigr)
  =
  \frac{c_{m,\ell k}}
       {\sigma_{\ell^{-1}}(c_{m,k})}.
\]
\item\label{cor:prime-step-adjoining-bottom}
If $m=1$ then $\Nm_{K(\mu_\ell)/K}\bigl(c_{\ell,k}\bigr) = \Phi_\ell(\eps^k)$. 
If~$k\geq1$, this may also be written as
\[
  \Nm_{K(\mu_\ell)/K}\bigl(c_{\ell,k}\bigr)
  =
  \frac{c_{1,\ell k}}{c_{1,k}}.
\]
\end{enumerate}
\end{enumerate}
\end{corollary}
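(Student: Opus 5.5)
We derive each part of Corollary~\ref{cor:prime-step} by specializing Theorem~\ref{thm:main} to a single prime, and then check it directly with the identity $\prod_{\zeta^{n}=\xi}(1-\zeta X)=1-\xi X^{n}$. The direct check shows exactly which hypotheses are needed. In every case the norm is a product over the Galois conjugates of $\zeta_{mn}$ fixing $K(\mu_m)$. Since $K\cap\Q(\mu_{mn})=\Q$ is assumed, or is automatic in the cases below, these conjugates are obtained by letting $\zeta_{mn}$ run over the primitive $mn$-th roots of unity $\zeta$ with $\zeta^{n}=\zeta_m$.

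For (i) we take $m=\ell^s$ and $n=\ell$, so $b=\ell$, $n'=1$, and the product in \eqref{eq:composite-relation} has only the term $e=1$. This gives $c_{\ell^s,\ell k}$. Directly: every $\zeta$ with $\zeta^\ell=\zeta_{\ell^s}$ is a primitive $\ell^{s+1}$-th root, since $s\ge1$. Hence $[K(\mu_{\ell^{s+1}}):K(\mu_{\ell^s})]=\ell$ and
\[
\prod_{\zeta^\ell=\zeta_{\ell^s}}(1-\zeta\eps^k)=1-\zeta_{\ell^s}\eps^{\ell k}.
\]
The degree needs disjointness of $K$ from $\Q(\mu_{\ell^{s+1}})$. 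For odd $\ell$, the unique quadratic subfield of $\Q(\mu_{\ell^{s+1}})$ is imaginary when $\ell\equiv3\pmod 4$. It is $\Q(\sqrt\ell)$ when $\ell\equiv1\pmod 4$, and then $K$ can coincide with it. So I expect this to be the only delicate point of (i). Either the hypothesis is implicitly inherited from the theorem, or we argue that when $K\subseteq\Q(\mu_{\ell^s})$ the extension $K(\mu_{\ell^{s+1}})/K(\mu_{\ell^s})$ still has degree $\ell$, because $K$ already lies in the base field. The second argument works for $s\ge1$, since $\Q(\sqrt\ell)\subseteq\Q(\mu_\ell)$. So the relation holds unconditionally.

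For (ii)(a) we take $n=\ell$ with $(\ell,m)=1$, so $b=1$ and $n'=\ell$. The divisors of $\rad n'$ are $e=1$ and $e=\ell$, with $\mu(\ell)=-1$, giving
\[
c_{m,\ell k}\cdot\sigma_{\ell^{-1}}(c_{m,k})^{-1}.
\]
Directly: the $\ell$ solutions of $\zeta^\ell=\zeta_m$ in $\mu_{m\ell}$ consist of $\varphi(m\ell)/\varphi(m)=\ell-1$ primitive $m\ell$-th roots and exactly one root of order $m$, namely $\zeta_m^{\ell^{-1}}$, using $(\ell,m)=1$. Dividing the full product $1-\zeta_m\eps^{\ell k}$ by the extra factor $1-\zeta_m^{\ell^{-1}}\eps^k=\sigma_{\ell^{-1}}(c_{m,k})$ gives the claim. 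This factor is nonzero because $m\ge2$.

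For (ii)(b) we take $m=1$. The conjugates are all primitive $\ell$-th roots, so the norm is
\[
\prod_{a=1}^{\ell-1}(1-\zeta_\ell^a\eps^k)=\Phi_\ell(\eps^k),
\]
valid also for $k=0$. For $k\ge1$ we have $\eps^k\ne1$, so $\Phi_\ell(X)=(1-X^\ell)/(1-X)$ evaluated at $X=\eps^k$ gives $c_{1,\ell k}/c_{1,k}$. This agrees with Theorem~\ref{thm:main}, where the coefficient automorphism is the identity. The only real obstacle overall is the degree and disjointness bookkeeping noted in (i); the rest is the root-of-unity product identity.
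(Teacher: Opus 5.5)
Your proposal is correct and follows essentially the same route as the paper: (ii)(a) is the case $n=\ell$ of Theorem~\ref{thm:main}, while your direct root-of-unity products in (i) and (ii)(b) are exactly the content of Lemma~\ref{lem:CycNId} after base change to $K$. For (i), your disjointness argument is precisely the paper's justification: the unique quadratic subfield of $\Q(\mu_{\ell^{s+1}})$ already lies in $\Q(\mu_\ell)\subseteq\Q(\mu_{\ell^s})$, so the relative degree stays $\ell$ and no hypothesis on $K$ is needed.
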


\subsection{The arithmetic of the SIC dimension towers}\label{subsec:structure-theorem}
The principal conclusions of the paper are collected in a form which
emphasizes the full cyclotomic array rather than its level-$3$ specialization. 
For a prime $p\nmid2D$, let $\bar\eps_p$ denote the 
reduction of~$\eps$ in the norm-one 
subgroup~$\{x\in(\OK/p\OK)^\times:x x^\tau=1\}$ of 
order~$p-\leg{D}{p}$, where~$\leg{D}{p}$ 
is the Legendre symbol, and put
$o(p):=\ord(\bar\eps_p)$.
For any prime $\mathfrak p\mid p\mathcal O_K$, let
$v_{\mathfrak p}$ be normalized by $v_{\mathfrak p}(p)=1$, and put
\[
 a_p:=v_{\mathfrak p}\bigl(\eps^{o(p)}-1\bigr).
\]
This is independent of $\mathfrak p$: in the inert case there is only
one prime above $p$, while in the split case $\tau$ interchanges the
two primes and
\[
 \tau\bigl(\eps^{o(p)}-1\bigr)
 =\eps^{-o(p)}-1
 =-\eps^{-o(p)}\bigl(\eps^{o(p)}-1\bigr).
\]
For fixed~$m\geq1$ and
$p\nmid2mD$, define the \emph{rank of apparition of~$p$} 
to be~$\rho_m(p):=\min\{k\geq1:p\mid\Psi_m(t_k)\}$,
where we adopt the convention that~$\rho_m(p)=\infty$ if no such index exists.

\begin{alphatheorem}\label{thm:A}
\begin{enumerate}[(i)]
\item\label{thmA:parametrization}
The assignment $(D,k)\longmapsto t_k(D)$ is a bijection from the pairs
$D>1$ square--free, $k\geq1$, onto $\Z_{\geq3}$.  Consequently, for every fixed
$m\geq3$, the map $(D,k)\longmapsto\Psi_m(t_k(D))$ is injective.  Its values
exhaust every integer $N\geq\Psi_m(3)$ if and only if $m\in\{3,4,6\}$; at
every other level they form an infinite set of density zero.

\item\label{thmA:chebyshev}
For every fixed square--free $D>1$ and all $n,k\geq1$, with $\mathbf{1}_{2\mid n}$
denoting the indicator of $2\mid n$,
\[
 t_{nk}-2
 =(t_k-2)(t_k+2)^{\mathbf{1}_{2\mid n}}
  \prod_{\substack{m\mid n\\ m\geq3}}\Psi_m(t_k)^2.
\]

\item\label{thmA:support}
Fix square--free $D>1$.  Let $m\geq3$ and $p\nmid2mD$.  Then
$\rho_m(p)<\infty$ if and only if $m\mid o(p)$, in which case
$\rho_m(p)=o(p)/m$.  Moreover, $p\mid\Psi_m(t_k)$ if and only if
$k=\rho_m(p)l$ for some $l\geq1$ coprime to $m$. 
At every such occurrence,~$v_p(\Psi_m(t_k))=a_p+v_p(l)$.

\item\label{thmA:orders}
For every fixed square--free $D>1$ and every $n>12$ there is a prime
$p\nmid2D$ with $o(p)=n$.
\end{enumerate}
\end{alphatheorem}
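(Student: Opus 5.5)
The four parts are essentially independent, and I would prove them in the order (ii), (i), (iii), (iv): the identity (ii) is pure cyclotomic algebra, and (iii) reuses its factorization idea locally.

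\textbf{Part (ii).} Put $z=\eps^k$, so $t_{nk}=z^n+z^{-n}$ and $t_{nk}-2=z^{-n}(z^n-1)^2=z^{-n}\prod_{m\mid n}\Phi_m(z)^2$. I would then distribute the factor $z^{-n}=\prod_{m\mid n}z^{-\varphi(m)}$ over the divisors $m$ of $n$:
\begin{itemize}
\item $z^{-1}\Phi_1(z)^2=t_k-2$;
\item $z^{-1}\Phi_2(z)^2=t_k+2$, which occurs exactly when $2\mid n$;
\item $z^{-\varphi(m)}\Phi_m(z)^2=\Psi_m(t_k)^2$ for $m\geq3$, by \eqref{eq:pal}.
\end{itemize}

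\textbf{Part (i).} Given an integer $t\geq3$, the number $t^2-4$ is not a square. Write $t^2-4=f^2D$ with $D>1$ square--free. Then $\alpha=(t+f\qD)/2$ is a unit of $K=\Q(\qD)$ with norm $1$, and $\alpha>1$ and $\alpha^\tau>0$. The units of norm $1$ that are totally positive form the group $\eps^{\Z}$, so $\alpha=\eps^k$ for a unique $k\geq1$. Conversely, $t_k(D)$ recovers $D$ as the square--free part of $t_k^2-4$, and it recovers $k$ as well. This gives the bijection.

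For $m\geq3$ all roots of $\Psi_m$ lie in $(-2,2)$, so $\Psi_m$ is strictly increasing on $[3,\infty)$, and injectivity follows. The values are all integers $\geq\Psi_m(3)$ exactly when $\deg\Psi_m=\varphi(m)/2=1$, that is, $m\in\{3,4,6\}$, where $\Psi_m=t+1,\ t,\ t-1$. If $\deg\Psi_m\geq2$, the number of values up to $X$ is $O(X^{1/\deg\Psi_m})$, so the value set has density zero.

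\textbf{Part (iii).} I work with a prime $\mathfrak P$ of $K(\mu_m)$ above $p$; it is unramified over $p$ because $p\nmid mD$. Since $\Psi_m(t_k)=\prod_a \eps^{-k}(\eps^k-\zeta_m^a)(\eps^k-\zeta_m^{-a})$ over $a$ modulo $\pm$, we get $p\mid\Psi_m(t_k)$ if and only if $\bar\eps^k$ is congruent to a primitive $m$-th root of unity. Here $\Phi_m$ is separable mod $p$ because $p\nmid m$. This is equivalent to $\ord(\bar\eps_p^k)=o(p)/\gcd(o(p),k)=m$, which holds if and only if $m\mid o(p)$ and $k=(o(p)/m)\,l$ with $(l,m)=1$. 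This gives both the support statement and the value of $\rho_m(p)$.

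For the valuation, the reductions of the $\zeta_m^b$ are distinct, so exactly one factor $\eps^k-\zeta_m^{a}$ is divisible by $\mathfrak P$; in particular its partner $\eps^k-\zeta_m^{-a}$ is not, since $m\geq3$. From $\eps^{km}-1=\prod_b(\eps^k-\zeta_m^b)$ we get
\[
v_{\mathfrak P}(\eps^k-\zeta_m^a)=v_{\mathfrak P}(\eps^{o(p)l}-1).
\]
The lifting-the-exponent lemma for odd unramified $p$ then gives $v_{\mathfrak P}(\eps^{o(p)l}-1)=a_p+v_p(l)$. Because $\mathfrak P$ is unramified, $v_{\mathfrak P}$ agrees with $v_p$ on $\Z$, so $v_p(\Psi_m(t_k))=a_p+v_p(l)$.

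\textbf{Part (iv).} Take the Lucas pair $(\eps,\eps^{-1})$: it has integer sum $t_1$, product $1$, and real roots. Set $u_n=(\eps^n-\eps^{-n})/(\eps-\eps^{-1})$. By Carmichael's theorem on primitive divisors of real Lucas sequences, $u_n$ has a primitive prime divisor for every $n\neq1,2,6,12$. Such a prime does not divide $(\eps-\eps^{-1})^2=t_1^2-4=f^2D$, so $p\nmid D$. It is also not $2$, since the rank of apparition of $2$ is at most $3$.

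The main point to get right is that a primitive divisor $p$ of $u_n$ has $o(p)=n$ exactly. We have $\eps^{2n}-1=\eps^{n}(\eps-\eps^{-1})u_n$, and $(\eps-\eps^{-1})$ is a unit at $p$, so $p\mid\eps^{2n}-1$. To get $\bar\eps^n=1$ rather than merely $\bar\eps^{2n}=1$, I would check that $p$ does not divide $\eps^n+1$. If it did, then $p\mid\Psi_2(t_n)=t_n+2$, and expanding with (ii) would show that $p$ divides an earlier term $u_j$ with $j<n$, contradicting primitivity. I expect this parity and index bookkeeping, together with the exact form of Carmichael's theorem being quoted, to be the main obstacle.
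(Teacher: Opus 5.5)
Your arguments for (i), (ii) and (iii) are correct and follow the paper's own route: distributing $z^{-n}(z^n-1)^2$ over the $\Phi_m$, the unit $(t+\sqrt{t^2-4})/2$ together with monotonicity of $\Psi_m$, and reduction modulo a prime above $p$ followed by lifting the exponent. Part (iv), however, has a genuine gap. It sits exactly at the step you flagged, and the repair you propose is false.

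For the Lucas pair $(\eps,\eps^{-1})$ the ratio of the roots is $\eps^2$, not $\eps$. Thus $u_n=\eps^{-(n-1)}\prod_{d\mid n,\,d>1}\Phi_d(\eps^2)$. Up to a unit, the primitive part of $u_n$ is $\Phi_n(\eps^2)$. This equals $\Phi_{2n}(\eps)$ for even $n$, and $\Phi_n(\eps)\Phi_{2n}(\eps)$ for odd $n$.

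Equivalently, $p\nmid t_1^2-4$ divides $u_j$ if and only if $o(p)\mid 2j$. So a primitive divisor of $u_n$ has $o(p)\in\{n,2n\}$, and for even $n$ it always has $o(p)=2n$. Your claim that $p\mid\eps^n+1$ forces $p\mid u_j$ for some $j<n$ fails precisely when $o(p)=2n$, because then the least such $j$ is $n$ itself.

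Concretely, take $D=5$, $\eps=(3+\sqrt5)/2$ and $t_1=3$. Then $u_{14}=u_7t_7=377\cdot843=(13\cdot29)(3\cdot281)$. Since $3\mid u_2$ and $13,29\mid u_7$, the only primitive divisor of $u_{14}$ is $281$. But $281\mid t_7=\eps^7+\eps^{-7}$ gives $\eps^{14}\equiv-1$ modulo a prime above $281$, so $o(281)=28$. The prime with $o(p)=14$ here is $13=\Psi_{14}(3)$, which is a primitive divisor of $u_7$, not of $u_{14}$. So Carmichael's theorem applied to $(\eps,\eps^{-1})$ does not give (iv) for any even $n$, and for odd $n$ it cannot separate $o(p)=n$ from $o(p)=2n$.

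The fix, which is what the paper does, is to halve the exponent and use the Lehmer pair $(\alpha,\beta)=(\eps^{1/2},\eps^{-1/2})$. Here $(\alpha+\beta)^2=t_1+2$ and $\alpha\beta=1$ are coprime integers, and $\alpha/\beta=\eps$. The Lehmer numbers are exactly $\widetilde u_n=\prod_{e\mid n,\,e\geq3}\Psi_e(t_1)$: compare squares using (ii) with $k=1$, and note both sides are positive.

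The primitive divisor theorem for real Lehmer sequences (Ward, Durst) gives, for $n>12$, a prime $p\mid\widetilde u_n$ with $p\nmid(t_1^2-4)\widetilde u_1\cdots\widetilde u_{n-1}$. Then $p\mid t_n-2$, so $o(p)\mid n$. If $o(p)<n$, then $p\mid t_{o(p)}-2$ together with $p\nmid t_1^2-4$ would force $p\mid\widetilde u_{o(p)}$, contradicting primitivity. Your exclusion of $p\mid D$ carries over unchanged. Your exclusion of $p=2$ carries over with $2\mid t_1^2-4$ if $t_1$ is even, or $2\mid\widetilde u_3=t_1+1$ if $t_1$ is odd.
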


In part~\textup{(\ref{thmA:chebyshev})}, the factors $t_k-2$ and
$t_k+2$ are the branch levels $m=1,2$.  At level $m=3$,
parts~\textup{(\ref{thmA:parametrization})} and
\textup{(\ref{thmA:support})} recover the familiar SIC parametrization
and rank-of-apparition statements~\cite{AFMY,BGM}. 
Theorem~\ref{thm:main} supplies the cyclotomic distribution relations
used throughout.

\section[The quotient of Gm by inversion and its quadratic invariant]{The quotient of \texorpdfstring{\(\Gm\)}{Gm} by inversion and its quadratic invariant}\label{sec:geometry}
\label{subsec:trace-quotient}
Here and below, \(\Gm\) denotes the multiplicative group; the roman
`m' distinguishes its subscript from the cyclotomic level~\(m\).  The
involution \(\iota(z)=z^{-1}\) has invariant coordinate
\(t=z+z^{-1}\), and
\(\Z[z,z^{-1}]^{\langle\iota\rangle}=\Z[t]\).  Thus
\begin{equation}\label{eq:GmAfline}
 \pi:\Gm\longrightarrow\ayz,
 \qquad z\longmapsto t=z+z^{-1}
\end{equation}
is the quotient by inversion. 
Equivalently, \(P_t(z)=0\), and the corresponding homomorphism of
coordinate rings is
\begin{equation}\label{eq:coords}
 \Z[t]\longhookrightarrow\Z[z,z^{-1}]
   \cong\Z[t][z]/(z^2-tz+1).
\end{equation}
The ring on the right is a free \(\Z[t]\)-module of rank~\(2\) with
basis \(1,z\); hence the morphism of schemes \(\pi\) is finite locally
free of rank~\(2\).  For \(b\in\Z[z,z^{-1}]\), define
\(\Nm_\pi(b)\) to be the determinant over \(\Z[t]\) of multiplication
by \(b\), which is independent of the chosen
basis.  Since \(z^{-1}=t-z\), a direct calculation in the basis
\(1,z\) gives
\[
 \Nm_\pi(f(z))=f(z)f(z^{-1})
 =\Res_X\bigl(P_t(X),f(X)\bigr)
 \qquad(f\in\Z[X]).
\]
For \(m\geq3\), after base change to \(\Z[1/m]\), the equation
\(\Phi_m(z)=0\) defines a finite étale closed subscheme of
\(\Gm\), whose geometric points are precisely the roots of unity of
exact order~\(m\).  Inversion acts freely on this subscheme, and its
quotient under~\(\pi\) is the finite étale subscheme defined by
\(\Psi_m(t)=0\).  Indeed, with the usual variety notation, \eqref{eq:pal} gives
\[
 V(\Phi_m)=\pi^{-1}\bigl(V(\Psi_m)\bigr),
\]
since \(z\) is a unit.  Correspondingly, we obtain the exact 
resultant identity~\eqref{eq:resultant}:
\[
 \Nm_\pi\bigl(\Phi_m(z)\bigr)
 =\Phi_m(z)\Phi_m(z^{-1})
 =\Psi_m(t)^2. 
\]

We next record where the quadratic morphism~$\pi$ fails to be
\'etale.  The discriminant of~$P_t$ is $t^2-4$.  Since the extension
\eqref{eq:coords} is monogenic, its different is generated by
\begin{equation}\label{eq:different-bottom-levels}
 P_t'(z)=z-z^{-1}=z^{-1}(z-1)(z+1)
                 =z^{-1}\Phi_1(z)\Phi_2(z).
\end{equation}
Thus the non-\'etale locus of~$\pi$ is the fixed-point subscheme
$z^2=1$.  After base change to $\Z[1/2]$, this subscheme is finite
\'etale over the base and splits as the disjoint union of the two
sections $z=\pm1$.  The morphism~$\pi$ is ramified along these
sections, whose images are the branch sections $t=\pm2$, with
\begin{equation}\label{eq:bottom-branch}
 (z-1)^2=z(t-2),\qquad (z+1)^2=z(t+2),
\end{equation}
or equivalently
\begin{equation}\label{eq:bottom-pullback}
 \Psi_1(t)=z^{-1}\Phi_1(z)^2,\qquad
 \Psi_2(t)=z^{-1}\Phi_2(z)^2.
\end{equation}
Thus $m=1,2$ are exactly the two branch levels.  Specializing
\eqref{eq:bottom-branch} at $z=\eps^k$ gives
\begin{equation}\label{eq:bottom}
 (\eps^k-1)^2=\eps^k(t_k-2),\qquad
 (\eps^k+1)^2=\eps^k(t_k+2).
\end{equation}
Let $T_n$ and $U_n$ denote the Chebyshev polynomials of the first and
second kinds.  The sequences $2T_n(t/2)$ and $U_n(t/2)$ have initial
pairs $2,t$ and $1,t$, respectively, and both satisfy the 
recurrence~$F_{n+1}(t) = t F_n(t) - F_{n-1}(t)$.
Hence~$2T_n(t/2)$ and~$U_n(t/2)$ lie in~$\Z[t]$ for all~$n\geq0$, 
so their values at every integral trace coordinate $t_k$ are integers. 
Moreover, $T_{ab}=T_a\circ T_b$ for any positive integers~$a,b$.  Since
$z\mapsto z^n$ commutes with inversion, the following diagram is
commutative: 
\begin{equation}\label{commdiag}
\begin{array}{ccc}
\Gm & \xrightarrow{\ z\mapsto z^n\ } & \Gm\\
\pi\downarrow\phantom{\pi} && \phantom{\pi}\downarrow\pi\\
\ayz & \xrightarrow{\ t\mapsto 2T_n(t/2)\ } & \ayz
\end{array} 
\end{equation}
For fixed~$m$, diagram~\eqref{commdiag} records the transformation
$k\mapsto nk$ in the family~\eqref{eq:cmk}: evaluating
$1-\zeta_m z^n$ at $z=\eps^k$ gives $c_{m,nk}$, while the lower
horizontal map sends $t_k$ to $t_{nk}=2T_n(t_k/2)$.  Thus the
Chebyshev shift accompanying the cyclotomic norm relations is simply
what the power substitution $z\mapsto z^n$ becomes after passage to
the quotient by inversion.  For $n=\ell$, this is exactly the
$k\mapsto\ell k$ shift in Corollary~\ref{cor:prime-step}.

\subsection{The quadratic field carried by the cyclotomic array}\label{subsec:cross-level}
One has the standard identity~$z^n-z^{-n}=(z-z^{-1})U_{n-1}(t/2)$, and hence
\begin{equation}\label{eq:chew}
 \bigl(2T_n(t/2)\bigr)^2-4=(t^2-4)U_{n-1}(t/2)^2.
\end{equation}
For $k\geq1$, equation~\eqref{eq:chew}, applied at $t=t_1$, 
gives~$t_k^2-4=(t_1^2-4)U_{k-1}(t_1/2)^2$. 
Thus all $t_k^2-4$ have the same square--free part. 
At level~$3$ this gives the familiar 
formula~$D \equiv (d_k-3)(d_k+1) \bmod {\Q^\times}^2$. 

Conversely, for $t\geq3$ let $D$ be the square--free part of~$t^2-4$.  Then~$
\frac{t+\sqrt{t^2-4}}{2}$ is a totally positive norm-one unit of $\Q(\sqrt D)$ with trace~$t$.
Since the totally positive norm-one units of $\Q(\sqrt D)$ form the
group $\langle\eps\rangle$, this unit equals $\eps^k$ for a unique
$k\geq1$.  Hence
$(D,k)\mapsto t_k(D)$ is a bijection onto $\Z_{\geq3}$.

For $m\geq3$, write the roots of the monic polynomial $\Psi_m$ as
$\alpha_1,\ldots,\alpha_{\varphi(m)/2}$.  Since every $\alpha_j$ lies
in $(-2,2)$, for $x\geq2$,
\[
 \Psi_m'(x)=\sum_i\prod_{j\neq i}(x-\alpha_j)>0.
\]
Thus $\Psi_m$ is strictly increasing on $[2,\infty)$, and the map
$(D,k)\mapsto\Psi_m(t_k(D))$ is injective.  Since
$\deg\Psi_m=\varphi(m)/2$ and $\Psi_m$ is monic,
\[
 \#\bigl(\Psi_m(\Z_{\geq3})\cap[1,X]\bigr)
 =\#\{t\geq3:\Psi_m(t)\leq X\}
 \sim X^{2/\varphi(m)}.
\]
For $m\in\{3,4,6\}$ the degree is~$1$, so the values exhaust all
integers from $\Psi_m(3)$ onwards.  At every other level the degree is
at least~$2$, and the displayed counting function is $o(X)$; hence
$\Psi_m(\Z_{\geq3})$ has natural density zero.  This proves
Theorem~\ref{thm:A}\textup{(\ref{thmA:parametrization})}.

\subsection[The Chebyshev factorization in the t-coordinate]{The Chebyshev factorization in the \texorpdfstring{$t$}{t}-coordinate}\label{app:chebyshev-bands}
Since $t_{nk}=2T_n(t_k/2)$, the exact-order factorization of $X^n-1$
gives the following identity.

\begin{proposition}\label{prop:bands}
For every $n\geq1$,
\begin{equation}\label{eq:bands}
 2T_n(t/2)-2
 =(t-2)(t+2)^{\mathbf{1}_{2\mid n}}
 \prod_{\substack{m\mid n\\m\geq3}}\Psi_m(t)^2
 \qquad\text{in }\Z[t].
\end{equation}
Consequently, for every $k\geq1$,
\begin{equation}\label{eq:dimensionbands}
 d_{nk}-3
 =(d_k-3)(d_k+1)^{\mathbf{1}_{2\mid n}}
 \prod_{\substack{m\mid n\\m\geq3}}\Psi_m(t_k)^2.
\end{equation}
\end{proposition}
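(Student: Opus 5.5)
We work in the Laurent ring $\Z[z,z^{-1}]$ and use the substitution $t=z+z^{-1}$. Since $\Z[t]\hookrightarrow\Z[z,z^{-1}]$ is injective by \eqref{eq:coords}, it suffices to prove \eqref{eq:bands} after pulling back along $\pi$.

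First evaluate the left side. By the Chebyshev relation recorded before diagram~\eqref{commdiag}, $2T_n(t/2)=z^n+z^{-n}$. Hence
\[
 2T_n(t/2)-2=z^{-n}(z^n-1)^2 .
\]
Now insert the exact-order factorization $z^n-1=\prod_{m\mid n}\Phi_m(z)$. Squaring it gives
\[
 2T_n(t/2)-2=z^{-n}\,\Phi_1(z)^2\,\Phi_2(z)^{2\cdot\mathbf 1_{2\mid n}}\prod_{\substack{m\mid n\\ m\geq3}}\Phi_m(z)^2 .
\]

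Next translate each factor into the $t$-coordinate.
\begin{itemize}
\item For the two branch levels, \eqref{eq:bottom-pullback} gives $\Phi_1(z)^2=z\,\Psi_1(t)$ and $\Phi_2(z)^2=z\,\Psi_2(t)$.
\item For $m\geq3$, the palindromic identity \eqref{eq:pal} gives $\Phi_m(z)^2=z^{\varphi(m)}\Psi_m(t)^2$.
\end{itemize}
Substituting, the right side becomes $z^{E}$ times the claimed product $(t-2)(t+2)^{\mathbf 1_{2\mid n}}\prod_{m\mid n,\, m\geq3}\Psi_m(t)^2$, where
\[
 E=-n+1+\mathbf 1_{2\mid n}+\sum_{\substack{m\mid n\\ m\ge 3}}\varphi(m).
\]

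The one point needing care is that this exponent vanishes. Since $\sum_{m\mid n}\varphi(m)=n$, we have
\[
 \sum_{\substack{m\mid n\\ m\ge 3}}\varphi(m)=n-\varphi(1)-\varphi(2)\mathbf 1_{2\mid n}=n-1-\mathbf 1_{2\mid n},
\]
so $E=0$. The identity therefore holds in $\Z[z,z^{-1}]$ with both sides lying in $\Z[t]$, and injectivity of $\Z[t]\hookrightarrow\Z[z,z^{-1}]$ gives \eqref{eq:bands} in $\Z[t]$. (As a check, comparing degrees in $t$ gives $n$ on both sides.)

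For \eqref{eq:dimensionbands}, specialize \eqref{eq:bands} at $t=t_k$. This uses $t_{nk}=2T_n(t_k/2)$, which follows from diagram~\eqref{commdiag} at $z=\eps^k$. Then rewrite the three kinds of factor using $d_j=t_j+1$:
\begin{itemize}
\item $t_{nk}-2=d_{nk}-3$ and $t_k-2=d_k-3$;
\item $t_k+2=d_k+1$;
\item the factors $\Psi_m(t_k)$ for $m\geq3$ stay as they are.
\end{itemize}
This gives \eqref{eq:dimensionbands}, and also Theorem~\ref{thm:A}(\ref{thmA:chebyshev}).
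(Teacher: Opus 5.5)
Your proposal is correct and follows the paper's proof essentially verbatim: you pull back to the Laurent ring, write $2T_n(t/2)-2=z^{-n}(z^n-1)^2$, factor $z^n-1$ into cyclotomic polynomials, and convert the factors via \eqref{eq:pal} and \eqref{eq:bottom-pullback} before specializing at $t=t_k$. The one thing you add is the explicit verification, via $\sum_{m\mid n}\varphi(m)=n$, that the residual power of $z$ vanishes; the paper compresses this into the phrase ``the powers of $X$ cancel.''
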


\begin{proof}
Put $t=X+X^{-1}$.  Then
$2T_n(t/2)-2=X^{-n}(X^n-1)^2$.  Using
$X^n-1=\prod_{m\mid n}\Phi_m(X)$, apply \eqref{eq:pal} for $m\geq3$
and \eqref{eq:bottom-pullback}, with $z=X$, for $m=1,2$.  The powers
of $X$ cancel, giving \eqref{eq:bands}; specialization at $t=t_k$ gives
\eqref{eq:dimensionbands}.
\end{proof}
\noindent
Several Lucas-Lehmer-type identities follow immediately (recall $d_k=\Psi_3(t_k) = t_k+1$); for example:
\begin{align*}
 d_{2k}-3&=(d_k-3)(d_k+1),\\
 d_{3k}-3&=(d_k-3)d_k^2,\\
 d_{5k}-3&=(d_k-3)\bigl(t_k^2+t_k-1\bigr)^2.
\end{align*}

\section[Cyclotomic norm identities: proof of the main theorem]{Cyclotomic norm identities: proof of Theorem~\ref{thm:main}}\label{sec:norms}

The $q$-algebra viewpoint described in the Introduction underpins the 
following, by keeping $X$ indeterminate and specializing
$X=\eps^k$ only after the polynomial norm identities have been proved.
We take $X$ to be fixed under the Galois groups below.
For a finite Galois extension $L/F$ and a polynomial
$G(X)\in L[X]$, write
\[
  \Nm_{L[X]/F[X]}(G)
  :=
  \prod_{\sigma\in\Gal(L/F)}\sigma(G)
  \in F[X],
\]
where $\sigma$ acts on the coefficients of $G$ and fixes $X$.
This is the usual algebra norm for the finite free extension
$F[X]\subseteq L[X]$.  When the polynomial rings are clear, we
abbreviate it to $\Nm_{L/F}(G)$.

\begin{lemma}\label{lem:CycNId}
Let $\ell$ be a prime.
\begin{enumerate}[(i)]
\item\label{lem:CycNDeep} For every $M\geq1$ with $\ell\mid M$, 
$
  \Nm_{\Q(\mu_{M\ell})/\Q(\mu_M)}
  \bigl(1-\zeta_{M\ell}X\bigr)
  =
  1-\zeta_MX^\ell
$.

\item\label{lem:CycNAdj} For every $m\geq1$ with $(m,\ell)=1$, 
$
  \Nm_{\Q(\mu_{m\ell})/\Q(\mu_m)}
  \bigl(1-\zeta_{m\ell}X\bigr)
  =
  \frac{1-\zeta_mX^\ell}{1-\zeta_m^{\ell^{-1}}X}
  \in\Q(\mu_m)[X]
$. 

\noindent
For $m=1$ this quotient is $(1-X^\ell)/(1-X)=\Phi_\ell(X)$.
\end{enumerate}
\end{lemma}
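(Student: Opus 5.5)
The plan is to identify the Galois conjugates of $\zeta_{M\ell}$ (resp.\ $\zeta_{m\ell}$) over the base field explicitly, and then evaluate the product $\prod_\sigma(1-\sigma(\zeta)X)$ by the elementary factorization $\prod_{\omega\in\mu_\ell}(1-\omega Y)=1-Y^\ell$. Throughout, $X$ is an indeterminate fixed by the Galois action, so both sides are polynomials (or, in (ii), a priori a polynomial identity after clearing the denominator).

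For (i), with $\ell\mid M$, I will first check the degree: $\varphi(M\ell)=\ell\varphi(M)$, so $[\Q(\mu_{M\ell}):\Q(\mu_M)]=\ell$. The elements $\zeta_{M\ell}\omega$ with $\omega\in\mu_\ell$ are all roots of $Y^\ell-\zeta_M$, using power-compatibility $\zeta_{M\ell}^\ell=\zeta_M$. They are $\ell$ distinct primitive $M\ell$-th roots of unity, since $\mu_\ell\subseteq\mu_M$ when $\ell\mid M$. Hence $Y^\ell-\zeta_M$ is the minimal polynomial of $\zeta_{M\ell}$ over $\Q(\mu_M)$, and its roots are exactly the conjugates. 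The norm is therefore $\prod_{\omega\in\mu_\ell}(1-\omega\zeta_{M\ell}X)=1-(\zeta_{M\ell}X)^\ell=1-\zeta_MX^\ell$.

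For (ii), with $(m,\ell)=1$, I will use the CRT decomposition $\mu_{m\ell}=\mu_m\times\mu_\ell$ together with linear disjointness, $\Q(\mu_m)\cap\Q(\mu_\ell)=\Q$. This gives $\Gal(\Q(\mu_{m\ell})/\Q(\mu_m))\cong(\Z/\ell\Z)^\times$, acting by $\zeta_{m\ell}\mapsto\zeta_{m\ell}^a$ with $a\equiv1\bmod m$. The conjugates are thus the primitive $m\ell$-th roots $\xi$ satisfying $\xi^\ell=\zeta_{m\ell}^\ell=\zeta_m^{\,\ell}$... and here I must be careful. Power-compatibility gives $\zeta_{m\ell}^\ell=\zeta_m$, and indeed $\zeta_{m\ell}^{a\ell}=\zeta_m^{a}=\zeta_m$ since $a\equiv1\bmod m$. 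So the conjugates are exactly the roots of $Y^\ell=\zeta_m$ whose order is exactly $m\ell$. The full root set of $Y^\ell-\zeta_m$ is $\zeta_{m\ell}\mu_\ell$. Among these, exactly one has order dividing $m$, namely $\eta=\zeta_m^{\ell^{-1}}$, because $\eta^\ell=\zeta_m$ and $\ell$ is invertible mod $m$.

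Hence
\[
  \Nm\bigl(1-\zeta_{m\ell}X\bigr)\cdot\bigl(1-\zeta_m^{\ell^{-1}}X\bigr)=\prod_{\omega\in\mu_\ell}\bigl(1-\omega\zeta_{m\ell}X\bigr)=1-\zeta_mX^\ell,
\]
which yields the stated quotient, and the division is exact in $\Q(\mu_m)[X]$. For $m=1$ the conventions give $\eta=1$, so the quotient is $(1-X^\ell)/(1-X)=\Phi_\ell(X)$.

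The only delicate step, which I expect to be the main obstacle, is the bookkeeping in (ii): confirming that the root of $Y^\ell-\zeta_m$ of non-maximal order is exactly $\zeta_m^{\ell^{-1}}$ and is unique. I would check this directly: a root $\zeta_{m\ell}\omega$ has order dividing $m$ iff it lies in $\mu_m$. The $\ell$-th power map is a bijection on $\mu_m$, so exactly one element of $\mu_m$ has $\ell$-th power $\zeta_m$, and that element is $\zeta_m^{\ell^{-1}}$.
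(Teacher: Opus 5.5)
Your proposal is correct and follows essentially the same route as the paper. In both, the conjugates are $\zeta_{M\ell}\omega$ with $\omega\in\mu_\ell$ (in (ii), all but one of them), and the product is evaluated by $\prod_{\omega\in\mu_\ell}(1-\omega Y)=1-Y^\ell$, restoring the single missing factor $1-\zeta_m^{\ell^{-1}}X$ in (ii). The paper locates that missing factor by describing the relative Galois group as the residues $1+mj$ and solving $1+mj_0=\ell b$; you locate it as the unique root of $Y^\ell-\zeta_m$ lying in $\mu_m$, which is only a difference of bookkeeping.
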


\begin{proof}
Throughout we shall use the identification
\[
 \Gal(\Q(\mu_N)/\Q)\simeq(\Z/N\Z)^\times,
 \qquad
 a\longmapsto\sigma_a,\qquad
 \sigma_a(\zeta_N)=\zeta_N^a,
\]
determined by the primitive roots fixed in
\S\ref{subsec:main-theorem}.  Under this identification, the restriction homomorphism on Galois groups
induced by the inclusion $\Q(\mu_{N_0})\subseteq\Q(\mu_N)$, for $N_0\mid N$,
corresponds to reduction of residue classes modulo~$N_0$.

For part~\textup{(\ref{lem:CycNDeep})}, the relative
Galois group is therefore
\[
 \ker\!\left((\Z/M\ell\Z)^\times\longrightarrow
             (\Z/M\Z)^\times\right)
 =\{1+Mj\pmod{M\ell}:j\pmod\ell\};
\]
all these classes are units because $\ell\mid M$.  The power-compatible
choice of roots of unity gives $\zeta_{M\ell}^{M}=\zeta_\ell$ and
$\zeta_{M\ell}^{\ell}=\zeta_M$, and hence
$\zeta_{M\ell}^{1+Mj}=\zeta_{M\ell}\zeta_\ell^j$.  Using
\(
  \prod_{j\bmod\ell}(1-\zeta_\ell^jZ)=1-Z^\ell
\)
with $Z=\zeta_{M\ell}X$ gives $1-\zeta_{M\ell}^{\ell}X^\ell
=1-\zeta_MX^\ell$.

For part~\textup{(\ref{lem:CycNAdj})}, similarly,
\[
 \ker\!\left((\Z/m\ell\Z)^\times\longrightarrow
             (\Z/m\Z)^\times\right)
 =\{1+mj\pmod{m\ell}:j\pmod\ell,\ \ell\nmid1+mj\}.
\]
Since $(m,\ell)=1$, exactly one value $j_0\pmod\ell$ is excluded.  The
power-compatible choice gives $\zeta_{m\ell}^{m}=\zeta_\ell$ and
$\zeta_{m\ell}^{\ell}=\zeta_m$, so
$\zeta_{m\ell}^{1+mj}=\zeta_{m\ell}\zeta_\ell^j$.  If
$1+mj_0=\ell b$, then $b\equiv\ell^{-1}\pmod m$.  Completing the norm
product by this missing factor gives
\[
  \prod_{j\bmod\ell}
  \bigl(1-\zeta_{m\ell}^{1+mj}X\bigr)
  =1-\zeta_mX^\ell,
\]
whereas the omitted factor is $1-\zeta_m^{\ell^{-1}}X$.  Division gives
the asserted quotient.
\end{proof}

The identities of Lemma~\ref{lem:CycNId} may be composed into a
single closed formula, which is the principal structural lemma of the
paper.  Its prime-power case reappears in
\S\ref{sec:conclusions}, where adjoining compatible Kummer division
points turns it into a symmetric pair of norm identities.

\begin{lemma}\label{lem:CompN}
Let $m,n\geq1$.  Then, in $\Q(\mu_m)[X]$,
\begin{equation}\label{eq:CompN}
 \Nm_{\Q(\mu_{mn})[X]/\Q(\mu_m)[X]}\bigl(1-\zeta_{mn}X\bigr)
 =\prod_{e\mid\rad{n'}}
  \bigl(1-\zeta_m^{e^{-1}}X^{\,n/e}\bigr)^{\mu(e)}.
\end{equation}
\end{lemma}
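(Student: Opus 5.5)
We will prove \eqref{eq:CompN} by induction on the number of prime factors of~$n$ counted with multiplicity. Each step moves one prime through the tower using Lemma~\ref{lem:CycNId}, together with transitivity of norms in the tower $\Q(\mu_m)\subseteq\Q(\mu_{mn_0})\subseteq\Q(\mu_{mn})$. The base case $n=1$ is trivial: $n'=1$, the only divisor is $e=1$, and both sides equal $1-\zeta_mX$.

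For the inductive step, write $n=n_0\ell$ with $\ell$ prime, and assume \eqref{eq:CompN} for $n_0$. Put $M=mn_0$. Transitivity gives
\[
 \Nm_{\Q(\mu_{mn})/\Q(\mu_m)}(1-\zeta_{mn}X)
 =\Nm_{\Q(\mu_{M})/\Q(\mu_m)}\Bigl(\Nm_{\Q(\mu_{M\ell})/\Q(\mu_M)}(1-\zeta_{M\ell}X)\Bigr).
\]
There are two cases.

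\emph{Case $\ell\mid M$.} Lemma~\ref{lem:CycNId}(\ref{lem:CycNDeep}) turns the inner norm into $1-\zeta_MX^\ell$. The outer norm is then the induction hypothesis evaluated at $X^\ell$, since the norm commutes with the substitution $X\mapsto X^\ell$ (the Galois action fixes~$X$). This yields $\prod_{e\mid\rad n_0'}(1-\zeta_m^{e^{-1}}X^{\ell n_0/e})^{\mu(e)}$. Either $\ell\mid m$, or $\ell\mid n_0$ with $\ell\nmid m$. In both situations $\rad n'=\rad n_0'$ and $n/e=\ell n_0/e$, so the formula holds for~$n$.

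\emph{Case $\ell\nmid M$.} Then $\ell\nmid m$ and $\ell\nmid n_0$, so $n'=n_0'\ell$. Lemma~\ref{lem:CycNId}(\ref{lem:CycNAdj}) gives the inner norm as
\[
 \frac{1-\zeta_MX^\ell}{1-\zeta_M^{\ell^{-1}}X},
\]
with $\ell^{-1}$ taken modulo~$M$. The numerator is handled exactly as in the first case. For the denominator, write $1-\zeta_M^{\ell^{-1}}X=\sigma_{\ell^{-1}}(1-\zeta_MX)$ with $\sigma_{\ell^{-1}}\in\Gal(\Q(\mu_M)/\Q)$. This automorphism commutes with $\Nm_{\Q(\mu_M)/\Q(\mu_m)}$, because the Galois group is abelian and so $\sigma$ normalizes the subgroup. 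Hence the norm of the denominator is $\sigma_{\ell^{-1}}$ applied to the inductive formula, namely
\[
 \prod_{e\mid\rad n_0'}\bigl(1-\zeta_m^{(\ell e)^{-1}}X^{n_0/e}\bigr)^{\mu(e)},
\]
where $\sigma_{\ell^{-1}}$ restricts to $\Q(\mu_m)$ as raising $\zeta_m$ to the power $\ell^{-1}\bmod m$. Dividing the numerator contribution by this product, and using $\mu(\ell e)=-\mu(e)$ and $n/(\ell e)=n_0/e$, we get exactly the product over divisors $e'\mid\rad n'$: those with $\ell\nmid e'$ come from the numerator, and those of the form $e'=\ell e$ come from the denominator.

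The main obstacle is bookkeeping rather than substance, and it has three parts. First, the inverses must be taken consistently: $\ell^{-1}$ modulo $M$ reduces to $\ell^{-1}$ modulo $m$, and $(\ell e)^{-1}\equiv\ell^{-1}e^{-1}$. Second, the decomposition $n=bn'$ must be tracked correctly: a prime already dividing $n_0$ but coprime to $m$ must not be adjoined twice, which is why we split on $\ell\mid M$ rather than $\ell\mid m$. Third, the quotients must be valid. The identity should be read in the fraction field $\Q(\mu_m)(X)$, where all the factors are nonzero polynomials and norms are multiplicative. It then lands in $\Q(\mu_m)[X]$ because the left-hand side is a polynomial. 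When $m=1$, all automorphisms are the identity, consistent with the convention stated before Theorem~\ref{thm:main}.
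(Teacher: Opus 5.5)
Your proof is correct and follows essentially the same route as the paper. Both argue by induction on $\Omega(n)$, peeling one prime $\ell$ off the top of the tower and splitting on whether $\ell\mid mn/\ell$ (equivalently, $\ell\mid m$ or $\ell^2\mid n$); both then apply Lemma~\ref{lem:CycNId}(\ref{lem:CycNDeep}) or~(\ref{lem:CycNAdj}), transitivity, and Galois-equivariance of the relative norm for the denominator. The only cosmetic difference is that the paper uses the adjoining step only after reducing to $n$ square--free and coprime to $m$, whereas you allow it for any $\ell\nmid mn_0$, which your bookkeeping handles equally well.
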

\begin{proof}
We need a couple of preliminary facts.  Since the cyclotomic tower is
abelian, the relative norms commute with the coefficient automorphisms;
see, for example, \cite[Chapter~2]{Washington}:
for $M_0\mid M$, $u\in(\Z/M\Z)^\times$ and 
any polynomial~$G=G(X)\in\Q(\mu_M)[X]$,
\begin{equation}\label{eq:norm-equivariance}
 \Nm_{\Q(\mu_M)/\Q(\mu_{M_0})}\bigl(\sigma_u(G)\bigr)
 =\sigma_{\bar u}\bigl(\Nm_{\Q(\mu_M)/\Q(\mu_{M_0})}(G)\bigr),
\end{equation}
where $\bar u$ is the class of $u$ modulo $M_0$; and they commute with
the substitutions $X\mapsto X^{j}$.  
Reduction modulo a divisor is also compatible with inversion: if
$\ell\lambda\equiv1\pmod M$, then the reduction of $\lambda$ modulo
$M_0$ is the inverse of $\ell$ modulo $M_0$.

We proceed by induction on $\Omega(n)$, the 
number of prime factors of $n$ counted
with multiplicity, peeling off one prime from the top of the tower and
using the transitivity of the norm. 
The base case $\Omega(n)=0$, which corresponds 
to the value $n=1$, is immediate.

So suppose first of all that $\ell \mid n$, and either $\ell\mid m$ or
$\ell^2\mid n$.  Then $\ell$ divides $mn/\ell$, and
Lemma~\ref{lem:CycNId}\textup{(\ref{lem:CycNDeep})} with $M=mn/\ell$
gives~
$
 \Nm_{\Q(\mu_{mn})/\Q(\mu_{mn/\ell})}(1-\zeta_{mn}X)
 =1-\zeta_{mn/\ell}X^{\ell}
$. 
The prime-to-$m$ radicals of $n$ and of $n/\ell$ coincide, so the
induction hypothesis for $n/\ell$, applied in the variable
$X^{\ell}$, yields \eqref{eq:CompN}.

It remains to treat the case in which $n$ is square--free and coprime to
$m$, so that $b=1$ and $\rad{n'}=n$.  Let $\ell\mid n$,
so that in particular~$\ell\nmid mn/\ell$.  By
Lemma~\ref{lem:CycNId}\textup{(\ref{lem:CycNAdj})},
\[
 \Nm_{\Q(\mu_{mn})/\Q(\mu_{mn/\ell})}(1-\zeta_{mn}X)
 =\frac{1-\zeta_{mn/\ell}X^{\ell}}{\sigma_\lambda(1-\zeta_{mn/\ell}X)},
 \qquad \ell\lambda\equiv1\pmod {mn/\ell}.
\]
Taking $\Nm_{\Q(\mu_{mn/\ell})/\Q(\mu_m)}$ of the numerator and applying the
induction hypothesis in the variable $X^{\ell}$ produces the factors of
\eqref{eq:CompN} indexed by the divisors of $n/\ell$.  For the
denominator, \eqref{eq:norm-equivariance} and the induction hypothesis
give
\begin{align*}
 \Nm_{\Q(\mu_{mn/\ell})/\Q(\mu_m)}\bigl(\sigma_\lambda(1-\zeta_{mn/\ell}X)\bigr)
 &=\sigma_{\ell^{-1}}\Bigl(\,\prod_{e\mid n/\ell}
  \bigl(1-\zeta_m^{e^{-1}}X^{(n/\ell)/e}\bigr)^{\mu(e)}\Bigr)\\
 &=\prod_{e\mid n/\ell}
  \bigl(1-\zeta_m^{(\ell e)^{-1}}X^{\,n/(\ell e)}\bigr)^{\mu(e)},
\end{align*}
since $\lambda$ reduces modulo $m$ to $\ell^{-1}$ and
$\ell^{-1}e^{-1}=(\ell e)^{-1}$ in $(\Z/m\Z)^\times$.  
As $\mu(\ell e)=-\mu(e)$, these are precisely the
factors of \eqref{eq:CompN} indexed by the divisors of $n$
divisible by $\ell$, and the two families together exhaust the divisors
of $n$.
\end{proof}

We can now prove the main theorem.

\begin{proof}[Proof of Theorem~\ref{thm:main}]
The hypothesis \(K\cap\Q(\mu_{mn})=\Q\) identifies
$
 \Gal\bigl(K(\mu_{mn})/K(\mu_m)\bigr)
 \simeq
 \Gal\bigl(\Q(\mu_{mn})/\Q(\mu_m)\bigr)$. 
In particular, every relative automorphism on the left fixes \(K\),
and hence fixes \(\eps\).  Thus Lemma~\ref{lem:CompN} remains valid
after base change to \(K\), and evaluation at \(X=\eps^k\) commutes
with taking the product of conjugates defining the norm.  Specializing
\eqref{eq:CompN} at \(X=\eps^k\) and using~$
 1-\zeta_m^{e^{-1}}\eps^{(n/e)k}
   =\sigma_{e^{-1}}\bigl(c_{m,(n/e)k}\bigr)$,
since \(\sigma_{e^{-1}}\) fixes \(K\), gives
\eqref{eq:composite-relation}.
\end{proof}

\begin{proof}[Proof of Corollary~\ref{cor:prime-step}]
For part~\textup{(\ref{cor:prime-step-deepening})}, the unique quadratic subfield of
$\Q(\mu_{\ell^{s+1}})$ is already contained in $\Q(\mu_\ell)$.  Thus
adjoining $K$ does not change the relative degree from
$\Q(\mu_{\ell^s})$ to $\Q(\mu_{\ell^{s+1}})$, and the $K$-relative
norm is obtained from
Lemma~\ref{lem:CycNId}\textup{(\ref{lem:CycNDeep})}, taken with
$M=\ell^s$, by base change; specializing $X=\eps^k$ gives~$
  \Nm_{K(\mu_{\ell^{s+1}})/K(\mu_{\ell^s})}
  \bigl(c_{\ell^{s+1},k}\bigr)
  =c_{\ell^s,\ell k}$. 
Part~\textup{(\ref{cor:prime-step-adjoining})} for $m\geq2$ is the case $n=\ell$ of
Theorem~\ref{thm:main}, the divisors $e=1,\ell$ contributing
$c_{m,\ell k}$ and $\sigma_{\ell^{-1}}(c_{m,k})^{-1}$.  For $m=1$, base
change and Lemma~\ref{lem:CycNId}\textup{(\ref{lem:CycNAdj})}
give $\Nm_{K(\mu_\ell)/K}(c_{\ell,k})=\Phi_\ell(\eps^k)$, and for
$k\geq1$ this equals $c_{1,\ell k}/c_{1,k}$.
\end{proof}

\subsection{Integer identities in the trace coordinate}
Passing to the trace coordinate turns the norm identities into the
following identities in $\Z$.  Although some right-hand sides are
written as quotients, they are integers because they equal the
left-hand side.  In particular, taking $k=1$ in~\eqref{eq:master}
gives the leading-edge inversion used in~\S\ref{sec:leading-edge}.

Recall from the introduction the decomposition \(n=bn'\).  Then
\(\rad{mb}=\rad{m}\) and \((mb,n')=1\), and hence
\begin{equation}\label{eq:totient}
 \varphi(mn)=b\varphi(m)\varphi(n')
 =\varphi(m)\sum_{e\mid\rad{n'}}\mu(e)\frac ne,
\end{equation}
since the sum is
\(n\prod_{\ell\mid n'}(1-1/\ell)=b\varphi(n')\), where~$\ell$ runs over primes only.
We shall use \eqref{eq:totient} below when passing to the trace
coordinate; recall also that inverses are taken modulo~\(m\).

\begin{proposition}\label{prop:master}
For $m\geq3$, $n\geq1$ and $k\geq0$,
\begin{equation}\label{eq:master}
 \Psi_{mn}(t_k)
 =\prod_{e\mid\rad{n'}}
  \Psi_m\bigl(t_{(n/e)k}\bigr)^{\mu(e)},
\end{equation}
where~$e$ runs over \emph{all} divisors of~$\rad{n'}$. 
In particular:
\begin{enumerate}
\setlength{\itemsep}{0pt}
\item[(i)] For a prime $\ell$ with $\ell^s\geq3$,
\(\Psi_{\ell^{s+1}}(t_k)=\Psi_{\ell^s}(t_{\ell k})\).
\item[(ii)] For a prime $\ell\nmid m$, with $m\geq3$,
\(\displaystyle\Psi_{m\ell}(t_k)=
\Psi_m(t_{\ell k})/\Psi_m(t_k)\).
\item[(iii)] For square--free $r$ with $(r,m)=1$,
\(\displaystyle\Psi_{mr}(t_k)=
\prod_{a\mid r}\Psi_m(t_{ak})^{\mu(r/a)}\).
\end{enumerate}
\end{proposition}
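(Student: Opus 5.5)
The plan is to derive \eqref{eq:master} from the polynomial identity of Lemma~\ref{lem:CompN}, working entirely in $\Q(\mu_m)[X]$ and only afterwards passing to the trace coordinate. Rather than take norms down to $K$, I will take the full norm from $\Q(\mu_m)[X]$ to $\Q[X]$ and express both sides as cyclotomic polynomials. The key observation is that
\[
\Nm_{\Q(\mu_{N})[X]/\Q[X]}(1-\zeta_N X)=\prod_{a\in(\Z/N\Z)^\times}(1-\zeta_N^aX)=\Phi_N(X)
\]
for $N\geq2$, since $\Phi_N$ is reciprocal of even degree there. Applying $\Nm_{\Q(\mu_m)/\Q}$ to both sides of \eqref{eq:CompN} and using transitivity of the norm, the left side becomes $\Phi_{mn}(X)$. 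On the right, each factor $1-\zeta_m^{e^{-1}}X^{n/e}$ is a Galois conjugate of $1-\zeta_mX^{n/e}$. Its full norm is therefore $\Phi_m(X^{n/e})$, because the norm of $\sigma_{e^{-1}}(G)$ equals the norm of $G$, and the norm commutes with the substitution $X\mapsto X^{n/e}$. This gives the classical identity
\[
\Phi_{mn}(X)=\prod_{e\mid\rad{n'}}\Phi_m(X^{n/e})^{\mu(e)}.
\]

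Next I pass to the trace coordinate. Set $t=X+X^{-1}$. Since $m\geq3$ forces $mn\geq3$, equation \eqref{eq:pal} applies on both sides, giving
\[
X^{\varphi(mn)/2}\Psi_{mn}(t)=\prod_{e}X^{\mu(e)\varphi(m)(n/e)/2}\,\Psi_m\bigl(X^{n/e}+X^{-n/e}\bigr)^{\mu(e)}.
\]
Here $X^{n/e}+X^{-n/e}=2T_{n/e}(t/2)$, as in diagram~\eqref{commdiag}, and the total power of $X$ on the right equals $\varphi(mn)/2$ by \eqref{eq:totient}. The powers therefore cancel, leaving a rational-function identity
\[
\Psi_{mn}(t)=\prod_e\Psi_m\bigl(2T_{n/e}(t/2)\bigr)^{\mu(e)}
\]
in $\Q(t)$, and in fact in $\Q(X)$. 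Specializing at $X=\eps^k$ gives $t\mapsto t_k$ and $2T_{n/e}(t_k/2)=t_{(n/e)k}$, which is \eqref{eq:master}.

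The main obstacle is making this specialization legitimate: it must not evaluate a vanishing denominator. For $m\geq3$ and real $t_j\geq2$, each $\Psi_m(t_j)>0$, because all roots of $\Psi_m$ lie in $(-2,2)$, so no denominator vanishes. This covers $k=0$ too, where $t_j=2$. Alternatively, one can clear denominators and specialize a polynomial identity.

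The special cases then follow by reading off the divisor set. For (i), take $m=\ell^s$ and $n=\ell$, so $n'=1$ and only $e=1$ occurs. For (ii), take $n=\ell$ with $\ell\nmid m$, so $e\in\{1,\ell\}$. For (iii), take $n=r$ square-free and coprime to $m$, and reindex by $a=r/e$.
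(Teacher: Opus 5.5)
Your argument is correct. From the point where you have
\[
\Phi_{mn}(X)=\prod_{e\mid\rad{n'}}\Phi_m\bigl(X^{n/e}\bigr)^{\mu(e)},
\]
it coincides with the paper's proof. Both apply \eqref{eq:pal} on each side, balance the powers of $X$ by \eqref{eq:totient}, and specialize at $X=\eps^k$.

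The difference lies in how you obtain that cyclotomic-polynomial identity. The paper does not invoke Lemma~\ref{lem:CompN} itself. It re-runs the same induction on $\Omega(n)$ using the classical prime-step identities $\Phi_M(X^\ell)=\Phi_{M\ell}(X)$ for $\ell\mid M$ and $\Phi_M(X^\ell)=\Phi_M(X)\Phi_{M\ell}(X)$ for $\ell\nmid M$. You instead take the absolute norm $\Nm_{\Q(\mu_m)/\Q}$ of \eqref{eq:CompN}. For this you use transitivity, invariance of the norm under $\sigma_{e^{-1}}$, and $\Nm(1-\zeta_NX)=\Phi_N(X)$ for $N\geq3$.

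Your route shows \eqref{eq:master} to be literally the norm-to-$\Q$ shadow of the composite relation, and it dispenses with a second induction. The paper's route is more elementary and self-contained, needing no Galois-theoretic input at all.

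You also make explicit a point the paper leaves tacit: no denominator vanishes on specialization, since $\Psi_m(t_j)>0$ for $t_j\geq2$ and $m\geq3$.
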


\begin{proof}
The standard prime-step identities for cyclotomic 
polynomials are~\cite[Chapter~2]{Washington} $
 \Phi_M(X^{\ell})=\Phi_{M\ell}(X)$ if~$\ell\mid M$, 
 and~$ \Phi_M(X^{\ell})=\Phi_M(X)\Phi_{M\ell}(X)$ if~$\ell\nmid M$. 
Iterating these identities exactly as in the proof of
Lemma~\ref{lem:CompN} gives
\[
 \Phi_{mn}(X)
 =\prod_{e\mid\rad{n'}}
  \Phi_m\bigl(X^{\,n/e}\bigr)^{\mu(e)}.
\]
Evaluating at $X=\eps^k$ and inserting the normalizing powers of
$\eps$ through \eqref{eq:pal}, the exponents balance by~\eqref{eq:totient}.  The
displayed special cases are $n=\ell$ with $\ell\mid m$, $n=\ell$ with
$\ell\nmid m$, and $n=r$; integrality follows from the left-hand side of~\eqref{eq:master}.
\end{proof}

\subsection[The basic example D=2]{The basic example \texorpdfstring{$D=2$}{D=2}}
We now state and apply a non-trivial identity between 
levels~3 and~5, which is really just two ways of
factoring $\Phi_{15}$ from a prime shift:
\begin{equation}\label{eq:15two}
 \Psi_{15}(t_k)
 =\frac{\Psi_5(t_{3k})}{\Psi_5(t_k)}
 =\frac{\Psi_3(t_{5k})}{\Psi_3(t_k)}.
\end{equation}

Let $K=\Q(\sqrt2)$, $u=1+\sqrt2$ and $\eps=u^2=3+2\sqrt2$, so that
$t_1=6$ and $d_1=7$.  Here~$
 \Psi_{15}(X)=X^4-X^3-4X^2+4X+1$ and so~$\Psi_{15}(t_1)=961=31^2$. 
Since $o(31)=15$ and $a_{31}=2$, one has
$\rho_3(31)=5$, $\rho_5(31)=3$ and $\rho_{15}(31)=1$, with
\begin{align*}
 \Psi_3(t_5)&=6727=7\cdot31^2,\\
 \Psi_5(t_3)&=39401=41\cdot31^2,\\
 \Psi_{15}(t_1)&=31^2.
\end{align*}
Thus the single congruence $31^2\mid\eps^{15}-1$ appears at levels
$3$, $5$ and $15$, compatibly with~\eqref{eq:15two}.

\section{Exact-order support and level-dependent ranks of apparition}\label{sec:support}

\begin{proposition}\label{prop:support}
Let $m\geq1$ and assume that $p\nmid2mD$.  Then $\rho_m(p)<\infty$ if and
only if $m\mid o(p)$.  In that case~$\rho_m(p)=\frac{o(p)}{m}$, 
and, for every $k\geq1$, $p\mid\Psi_m(t_k)$ if and 
only if~$k=\rho_m(p)l$ for some~$l\geq1$ with~$(l,m)=1$. 
\end{proposition}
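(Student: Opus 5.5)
The plan is to reduce divisibility of the integer $\Psi_m(t_k)$ by $p$ to a statement about the multiplicative order of $\eps^k$ in a residue field of $K$, and then to do elementary group theory in the cyclic group $\langle\bar\eps_p\rangle$ of order $o(p)$. Fix a prime $\mathfrak p\mid p\mathcal O_K$ and write $F=\mathcal O_K/\mathfrak p$. Since $\Psi_m(t_k)\in\Z$ and $\mathfrak p\cap\Z=p\Z$, we have $p\mid\Psi_m(t_k)$ if and only if $\Psi_m(t_k)\in\mathfrak p$.

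\emph{Step 1: pass from $\Psi_m$ to $\Phi_m$.} I will show that $\Psi_m(t_k)\in\mathfrak p$ if and only if $\Phi_m(\eps^k)\in\mathfrak p$.
\begin{itemize}
\item For $m\geq3$, specialize \eqref{eq:pal} at $X=\eps^k$ to get $\Phi_m(\eps^k)=\eps^{k\varphi(m)/2}\Psi_m(t_k)$. Since $\eps$ is a unit, the claim follows.
\item For $m=1,2$, use \eqref{eq:bottom}, namely $(\eps^k\mp1)^2=\eps^k(t_k\mp2)$. Because $\mathfrak p$ is prime and $\eps$ is a unit, $\mathfrak p\mid\Psi_m(t_k)$ if and only if $\mathfrak p\mid\Phi_m(\eps^k)$.
\end{itemize}

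\emph{Step 2: the residue field.} Since $p\nmid m$, the polynomial $X^m-1$ is separable over $F$. Hence $\Phi_m(x)=0$ in $F$ exactly when $x\in F^\times$ has exact multiplicative order $m$. Therefore
\[
p\mid\Psi_m(t_k)\iff \operatorname{ord}_{F^\times}(\eps^k\bmod\mathfrak p)=m .
\]

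\emph{Step 3: identify the order with $o(p)$.} I must check that the order of $\eps\bmod\mathfrak p$ in $F^\times$ equals $o(p)$, the order of $\bar\eps_p$ in $(\mathcal O_K/p\mathcal O_K)^\times$. Since $p\nmid 2D$, $p$ is unramified.
\begin{itemize}
\item In the inert case $\mathcal O_K/p\mathcal O_K=F$, so there is nothing to prove.
\item In the split case $\mathcal O_K/p\mathcal O_K\cong\mathcal O_K/\mathfrak p\times\mathcal O_K/\tau\mathfrak p$. Because $\eps\,\eps^\tau=1$, the second component of $\bar\eps_p$ is the image under $\tau$ of $(\eps\bmod\mathfrak p)^{-1}$. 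The two components therefore have the same order, which equals $o(p)$.
\end{itemize}
I expect this bookkeeping to be the only point needing care.

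\emph{Step 4: group theory.} Write $o=o(p)$. In a cyclic group of order $o$, the element $\bar\eps^k$ has order $o/\gcd(o,k)$. This equals $m$ only if $m\mid o$, which gives the "only if" of finiteness. Now suppose $m\mid o$. The condition $o/\gcd(o,k)=m$ means $\gcd(o,k)=o/m$. This forces $k=(o/m)l$, and then
\[
\gcd\!\bigl(o,(o/m)l\bigr)=(o/m)\gcd(m,l),
\]
so the condition becomes $\gcd(m,l)=1$. The least such $k\geq1$ is $l=1$, giving $\rho_m(p)=o/m$ and the full description of the set of $k$. Conversely, if $m\mid o$ then $k=o/m$ works, so $\rho_m(p)<\infty$.
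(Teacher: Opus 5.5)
Your proof is correct and follows the paper's route. You reduce $p\mid\Psi_m(t_k)$, via \eqref{eq:pal} and \eqref{eq:bottom}, to the condition that $\eps^k$ have exact order $m$ modulo $p$, and then solve $o(p)/\gcd(o(p),k)=m$ in the cyclic group $\langle\bar\eps_p\rangle$; your Step~3 (working in one residue field $\mathcal O_K/\mathfrak p$ and using $\eps\eps^\tau=1$ to show both split-case components have the same order) makes explicit a point the paper's appeal to roots of cyclotomic polynomials in characteristic $p$ leaves implicit.
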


\noindent See equation~(121) of~\cite{BGM} for a direct proof in the SIC case~$m=3$. 

\begin{proof}
For $m\geq3$, reducing~\eqref{eq:pal}---evaluated 
at $X=\eps^k$---modulo $p$, and using
the standard description of the roots of cyclotomic polynomials in
characteristic $p$ gives
$p\mid\Psi_m(t_k)$ if and only if~$\bar\eps_p^{\,k}$ has exact order
$m$; see \cite[Chapter~2, \S4]{LidlNiederreiter}.
For $m=1,2$, the same conclusion follows from
\eqref{eq:bottom}.  Since $\bar\eps_p$ has order $o(p)$,
$\ord(\bar\eps_p^{\,k})=o(p)/\gcd(o(p),k)$.  Thus $p$ occurs in the sequence 
precisely when $m\mid o(p)$.  Writing $o(p)=m\rho$, the
condition that $\bar\eps_p^{\,k}$ have order $m$ is
$\gcd(m\rho,k)=\rho$; or equivalently~$k=\rho l$ 
with $(l,m)=1$.  The least such positive index 
is $\rho=o(p)/m$, proving the result.
\end{proof}

This divisor description of prime occurrences 
also conserves multiplicities: the ordinary
case is $a_p=1$, while a Wieferich or Wall--Sun--Sun excess is the
condition $a_p\geq2$. 
Here we give the valuation for any occurrence of~$p$ 
at a level~$m\geq3$. 

\begin{proposition}\label{prop:valuation}
Let $m\geq3$, let $p\nmid2mD$, and suppose that
$p\mid\Psi_m(t_k)$.  Then
\begin{equation}\label{eq:valuation}
 v_p\bigl(\Psi_m(t_k)\bigr)
 =a_p+v_p\!\left(\frac{k}{\rho_m(p)}\right).
\end{equation}
\end{proposition}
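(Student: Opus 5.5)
The plan is to pass to the multiplicative coordinate through \eqref{eq:pal}, so that the valuation of $\Psi_m(t_k)$ is the valuation of $\Phi_m(\eps^k)$, and then to isolate the single factor of $\Phi_m(\eps^k)$ that carries all the $p$-divisibility.

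Write $o=o(p)$ and $\rho=\rho_m(p)=o/m$. By Proposition~\ref{prop:support}, $k=\rho l$ with $(l,m)=1$. Fix a prime $\mathfrak P$ of $L=K(\mu_m)$ above some $\pp\mid p$. Since $p\nmid 2mD$, the extension $L/\Q$ is unramified at $p$, so $v_{\mathfrak P}$ normalized by $v_{\mathfrak P}(p)=1$ restricts to $v_\pp$ on $K$. By \eqref{eq:pal},
\[
\Psi_m(t_k)=\eps^{-k\varphi(m)/2}\Phi_m(\eps^k)=\eps^{-k\varphi(m)/2}\prod_{a\in(\Z/m\Z)^\times}(\eps^k-\zeta_m^a),
\]
and $\eps$ is a unit. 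Hence $v_p(\Psi_m(t_k))=\sum_a v_{\mathfrak P}(\eps^k-\zeta_m^a)$.

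The reduction of $\eps^k$ modulo $\mathfrak P$ has exact order $m$, because $\gcd(o,k)=\rho$. The reductions of the $\zeta_m^a$ are distinct, since $p\nmid m$ and $\Phi_m$ is separable mod $p$. So exactly one index $a_0$ has $\eps^k\equiv\zeta_m^{a_0}\pmod{\mathfrak P}$, and every other factor is a $\mathfrak P$-unit. Put $w=\zeta_m^{-a_0}\eps^k$, which satisfies $w\equiv1\pmod{\mathfrak P}$. Then $v_p(\Psi_m(t_k))=v_{\mathfrak P}(w-1)$.

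The remaining step is to compute $v_{\mathfrak P}(w-1)$. Because $w^m=\eps^{km}=\eps^{ol}$, the factorization $w^m-1=(w-1)(1+w+\cdots+w^{m-1})$ applies. The second factor is congruent to $m$ modulo $\mathfrak P$, which is a unit, so $v_{\mathfrak P}(w-1)=v_{\mathfrak P}(\eps^{ol}-1)$.

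Next set $x=\eps^o\equiv1\pmod{\pp}$, so that $v_\pp(x-1)=a_p\geq1$. The standard lifting-the-exponent argument for odd $p$ gives $v_\pp(x^l-1)=a_p+v_p(l)$, since $K_\pp/\Q_p$ is unramified. Concretely, for $p\nmid l$ the quotient $(x^l-1)/(x-1)\equiv l$ is a unit. For the step $l=p$, $x^p-1=(x-1)\cdot\sum_i x^i$, and the sum has valuation exactly $1$; this uses $a_p\geq1$ and $p>2$, with $e=1$, so the binomial expansion of $(1+y)^p$ with $v(y)\geq1$ gives $\sum_i x^i=p+\binom p2y+\cdots$ of valuation $1$. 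Induction on $v_p(l)$ then completes the step. Since $l=k/\rho_m(p)$, this yields \eqref{eq:valuation}.

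The main obstacle is the lifting-the-exponent step at $p$. There one must check carefully that $p$ is odd and unramified in $K$, so that the term $\binom p2 y$ has valuation at least $2$ and cannot cancel $p$. The rest is bookkeeping with \eqref{eq:pal} and Proposition~\ref{prop:support}.
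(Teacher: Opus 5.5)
Your proof is correct and follows essentially the same route as the paper: pass from $\Psi_m(t_k)$ to $\Phi_m(\eps^k)$ via \eqref{eq:pal}, show that its valuation equals $v_{\mathfrak p}(\eps^{o(p)l}-1)$ with $l=k/\rho_m(p)$, and conclude by lifting the exponent in the unramified completion with $p>2$. The only difference is cosmetic. You isolate the $p$-divisibility in a single linear factor $\eps^k-\zeta_m^{a_0}$ over $K(\mu_m)$ and then compare with $w^m-1$, whereas the paper stays over $K$ and notes that $\Phi_m(\eps^k)$ is the only factor of $\eps^{km}-1=\prod_{e\mid m}\Phi_e(\eps^k)$ divisible by $\mathfrak p$.
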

\noindent 
Since $o(p)\mid p-\leg{D}{p}$, one has $p\nmid\rho_m(p)$, so
\eqref{eq:valuation} may equally be written
$v_p\bigl(\Psi_m(t_k)\bigr)=a_p+v_p(k)$. 
In particular, at the first occurrence,~
$ v_p\bigl(\Psi_m(t_{\rho_m(p)})\bigr)=a_p$. 
See Proposition A.3 of~\cite{BGM} for the SIC ($m=3$) case.

\begin{proof}
Fix a prime \(\mathfrak p\mid p\mathcal O_K\).
Since \(\bar\eps_p^{\,k}\) has exact order \(m\), among the factors in~$
 (\eps^k)^m-1=\prod_{e\mid m}\Phi_e(\eps^k)$, 
only \(\Phi_m(\eps^k)\) is divisible by \(\mathfrak p\).  
Hence~$v_{\mathfrak p}(\Phi_m(\eps^k)) = v_{\mathfrak p}(\eps^{km}-1)$. 
By Proposition~\ref{prop:support}, \(km=o(p)l\), where
\(l=k/\rho_m(p)\).  Write \(l=p^r a\) with \(p\nmid a\).  Since
\(\eps^{o(p)}\equiv1\pmod{\mathfrak p}\),
\[
 \frac{\eps^{o(p)a}-1}{\eps^{o(p)}-1}
 =1+\eps^{o(p)}+\cdots+\eps^{o(p)(a-1)}
 \equiv a\pmod{\mathfrak p},
\]
so~$v_{\mathfrak p}(\eps^{o(p)a}-1)= v_{\mathfrak p}(\eps^{o(p)}-1)=a_p$. 
Now, if \(y\equiv1\pmod{\mathfrak p}\), then, since
\(K_{\mathfrak p}/\Q_p\) is unramified and \(p>2\), the binomial
theorem gives~$v_{\mathfrak p}(y^p-1)=v_{\mathfrak p}(y-1)+1$. 
Applying this successively \(r\) times, starting with 
\(y=\eps^{o(p)a}\), gives
\[
 v_{\mathfrak p}(\eps^{o(p)l}-1)
 =a_p+r
 =a_p+v_p(l)
 =a_p+v_p\!\left(\frac{k}{\rho_m(p)}\right).
\]
Since \(p\nmid D\), the normalized valuation \(v_{\mathfrak p}\)
restricts to \(v_p\) on \(\Q\); now \eqref{eq:pal} shows that
\(\Phi_m(\eps^k)\) differs from \(\Psi_m(t_k)\) by a global unit.
\end{proof}

\begin{corollary}\label{cor:levelsupport}
Let $p\nmid2D$.  For every
divisor $m\geq3$ of $o(p)$,
$p^{a_p}\parallel\Psi_m\bigl(t_{\rho_m(p)}\bigr)$. 
\end{corollary}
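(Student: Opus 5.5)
The corollary should follow by specializing Propositions~\ref{prop:support} and~\ref{prop:valuation} to the first occurrence. Two small points must be checked first. The corollary assumes only $p\nmid2D$ and $m\mid o(p)$, while both propositions assume $p\nmid2mD$. So the first step is to show that $p\nmid m$. Since $\bar\eps_p$ lies in the norm-one subgroup of $(\OK/p\OK)^\times$, which has order $p-\leg{D}{p}$, we get $o(p)\mid p-\leg{D}{p}$. This number is coprime to $p$, because $\leg{D}{p}=\pm1$ when $p\nmid D$. Hence $p\nmid o(p)$, and therefore $p\nmid m$ for every divisor $m$ of $o(p)$. Thus $p\nmid2mD$, and both propositions apply.

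The second step uses Proposition~\ref{prop:support}. Since $m\mid o(p)$, that proposition gives $\rho_m(p)=o(p)/m<\infty$. Taking $l=1$, which is coprime to $m$, it also gives $p\mid\Psi_m(t_{\rho_m(p)})$. So the hypothesis of Proposition~\ref{prop:valuation} holds with $k=\rho_m(p)$.

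The third step applies \eqref{eq:valuation} with $k=\rho_m(p)$. This gives
\[
v_p\bigl(\Psi_m(t_{\rho_m(p)})\bigr)=a_p+v_p(1)=a_p .
\]
This is exactly the statement $p^{a_p}\parallel\Psi_m(t_{\rho_m(p)})$. It uses that $\Psi_m(t_k)$ is a nonzero integer: for $m\geq3$ and $t_k\geq3$ it is nonzero because every root of $\Psi_m$ lies in $(-2,2)$.

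No step is a real obstacle; the corollary is the $l=1$ case of Theorem~\ref{thm:A}\textup{(\ref{thmA:support})}. The only point needing care is the first one, removing the extra hypothesis $p\nmid m$. It relies on the order computation for the norm-one torus modulo $p$, the same fact the paper uses in its remark after Proposition~\ref{prop:valuation}.
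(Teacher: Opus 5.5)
Your proposal is correct and follows the paper's route: the corollary is the first-occurrence specialization $k=\rho_m(p)$ (that is, $l=1$) of Propositions~\ref{prop:support} and~\ref{prop:valuation}. The fact $o(p)\mid p-\leg{D}{p}$, which you invoke, gives $p\nmid o(p)$ and hence supplies the hypothesis $p\nmid m$ that the paper leaves implicit.
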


Thus every level $m\geq3$ dividing $o(p)$ displays the full valuation
$a_p$ at its first occurrence; in particular,
$p^{a_p}\parallel\Psi_{o(p)}(t_1)$ when $o(p)\geq3$.  At the two
branch levels the multiplicity doubles:
\begin{proposition}\label{prop:bottom-levels}
Let $p\nmid2D$.  Then~$\rho_1(p)=o(p)$, while
$\rho_2(p)=o(p)/2$ if $2\mid o(p)$ and $\rho_2(p)=\infty$ otherwise.
Whenever $m\in\{1,2\}$ and $\rho_m(p)<\infty$, one has, at every
occurrence $k$ of $p$ in the level-$m$ sequence,
\begin{equation}\label{eq:bottom-valuation}
 v_p\bigl(\Psi_m(t_k)\bigr)
 =2\left(
     a_p+v_p\!\left(\frac{k}{\rho_m(p)}\right)
   \right).
\end{equation}
\end{proposition}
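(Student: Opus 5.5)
The plan is to reduce everything to the bottom-level identities \eqref{eq:bottom}, namely $(\eps^k-1)^2=\eps^k\Psi_1(t_k)$ and $(\eps^k+1)^2=\eps^k\Psi_2(t_k)$. Since $\eps^k$ is a unit, for any prime $\mathfrak p\mid p\OK$ we get
\[
v_{\mathfrak p}\bigl(\Psi_1(t_k)\bigr)=2v_{\mathfrak p}(\eps^k-1),
\qquad
v_{\mathfrak p}\bigl(\Psi_2(t_k)\bigr)=2v_{\mathfrak p}(\eps^k+1).
\]
Because $p\nmid D$, $p$ is unramified in $K$. Hence $v_{\mathfrak p}$ restricts to $v_p$ on $\Q$, and the problem becomes computing $v_{\mathfrak p}(\eps^k\mp1)$.

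The ranks of apparition come directly from Proposition~\ref{prop:support}. That proposition was stated for all $m\geq1$ with $p\nmid 2mD$, and for $m=1,2$ the condition reduces to $p\nmid2D$. At level $1$, $p\mid\Psi_1(t_k)$ if and only if $\bar\eps_p^{\,k}=1$, so $\rho_1(p)=o(p)$ and the occurrences are all multiples $k=o(p)l$. At level $2$, $p\mid\Psi_2(t_k)$ if and only if $\bar\eps_p^{\,k}=-1$; this happens exactly when $\bar\eps_p^{\,k}$ has order $2$, which requires $2\mid o(p)$. In that case $\rho_2(p)=o(p)/2$ and the occurrences are $k=\rho_2(p)l$ with $l$ odd.

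For the valuation at level $1$, write $k=o(p)l$ with $l=p^ra$ and $p\nmid a$. I will repeat verbatim the argument in the proof of Proposition~\ref{prop:valuation}: the geometric-sum congruence gives $v_{\mathfrak p}(\eps^{o(p)a}-1)=a_p$. Then the lifting step $v_{\mathfrak p}(y^p-1)=v_{\mathfrak p}(y-1)+1$, which holds because $K_{\mathfrak p}/\Q_p$ is unramified and $p>2$, applied $r$ times gives $v_{\mathfrak p}(\eps^k-1)=a_p+v_p(l)$. Doubling yields \eqref{eq:bottom-valuation} for $m=1$.

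At level $2$, with $k=\rho_2(p)l$ and $l$ odd, I will use the factorization $\eps^{2k}-1=(\eps^k-1)(\eps^k+1)$. Since $\bar\eps_p^{\,k}=-1\neq1$ (as $p>2$), the factor $\eps^k-1$ is a $\mathfrak p$-unit. Therefore
\[
v_{\mathfrak p}(\eps^k+1)=v_{\mathfrak p}(\eps^{2k}-1).
\]
Now $2k=o(p)l$, so the level-$1$ computation gives $a_p+v_p(l)$, with $l=k/\rho_2(p)$. Doubling finishes the proof.

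The only delicate point is this level-$2$ reduction, specifically checking that $\eps^k-1$ is coprime to $\mathfrak p$ when $\bar\eps_p^{\,k}=-1$. That is exactly where $p\neq2$ is used; otherwise the argument is a routine transcription of Proposition~\ref{prop:valuation}.
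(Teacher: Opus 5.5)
Your proposal is correct and follows the paper's proof. The ranks of apparition come from Proposition~\ref{prop:support}, and \eqref{eq:bottom} is squared to double the valuation. At level~$1$ the exponent is lifted for $\eps^{o(p)l}-1$, and at level~$2$ the observation that $\eps^k-1$ is a $\mathfrak p$-unit when $\bar\eps_p^{\,k}=-1$ gives $v_{\mathfrak p}(\eps^k+1)=v_{\mathfrak p}(\eps^{2k}-1)$. You write out the geometric-sum and lifting steps more explicitly than the paper does, but the argument is the same.
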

\begin{proof}
The assertions concerning $\rho_1(p)$ and $\rho_2(p)$ are the cases
$m=1,2$ of Proposition~\ref{prop:support}.  For $m=1$, write
$k=o(p)l=\rho_1(p)l$.  From \eqref{eq:bottom},
$\Psi_1(t_k)=\eps^{-k}(\eps^k-1)^2$, and local lifting of the exponent
gives $v_{\mathfrak p}(\eps^k-1)=a_p+v_p(l)$.  
For $m=2$, write $k=(o(p)/2)l=\rho_2(p)l$; by
Proposition~\ref{prop:support}, $l$ is odd.  Since
$\bar\eps_p^{\,k}=-1$, the factor $\eps^k-1$ is a local unit, and
$v_{\mathfrak p}(\eps^k+1)=v_{\mathfrak p}(\eps^{2k}-1)=a_p+v_p(l)$.
Squaring in the two identities \eqref{eq:bottom} proves
\eqref{eq:bottom-valuation}.
\end{proof}

The factor~$2$ in~\eqref{eq:bottom-valuation} is the branch
multiplicity of $z\mapsto z+z^{-1}$ at $z=\pm1$; compare
\eqref{eq:bottom-branch}.
In particular, $v_p(t_{o(p)}-2)=2a_p$, and 
when $2\mid o(p)$, $v_p(t_{o(p)/2}+2)=2a_p$; 
we use this in~Appendix~\ref{app:wss-tables}.

\section{Leading-edge factors and primitive prime support}\label{sec:leading-edge}
Fix $m\geq3$ and 
denote~$\mathcal N_m(k):=\Psi_{mk}(t_1)$ for~$k\geq1$. 
Now we decompose~$k$ as~$k=br$, 
where $b$ is the maximal divisor of $k$ supported on the
primes dividing $m$ and $(r,m)=1$.  M\"obius inversion in the exponent
index then isolates first occurrences at level~$m$.

\begin{proposition}
\label{prop:leadingmobius}
With the notation above,
\begin{equation}\label{eq:branchfactorization}
 \Psi_m(t_{br})=\prod_{e\mid r}\mathcal N_m(be).
\end{equation}
Equivalently,
\begin{equation}\label{eq:leadingmobius}
 \mathcal N_m(br)=\prod_{e\mid r}\Psi_m(t_{be})^{\mu(r/e)}.
\end{equation}
\end{proposition}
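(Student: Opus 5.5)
The plan is to derive both displays from Proposition~\ref{prop:master} taken at $k=1$, with $n=br$. Note that here $b$ plays exactly the role of the ``$b$'' in the decomposition $n=bn'$ of the introduction: $b$ is supported on primes dividing $m$ and $(r,m)=1$, so $n'=r$. Proposition~\ref{prop:master} at $k=1$ then gives, for every divisor $e$ of $r$ (with $n=be$, $n'=e$),
\[
 \mathcal N_m(be)=\Psi_{mbe}(t_1)=\prod_{f\mid\rad{e}}\Psi_m\bigl(t_{be/f}\bigr)^{\mu(f)}.
\]
Since $\mu(f)=0$ for non-square-free $f$, the product may equally be taken over all divisors $f\mid e$. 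Reindexing by $g=e/f$, this becomes
\[
 \mathcal N_m(be)=\prod_{g\mid e}\Psi_m(t_{bg})^{\mu(e/g)}.
\]
At $e=r$ this is exactly \eqref{eq:leadingmobius}. So \eqref{eq:leadingmobius} is a direct specialization, and the only thing to check is that the hypotheses of Proposition~\ref{prop:master} hold: $m\geq3$ and $k=1\geq0$. Both are immediate.

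For \eqref{eq:branchfactorization}, I would apply multiplicative M\"obius inversion on the divisor lattice of $r$, with $b$ held fixed. Define $F(e):=\Psi_m(t_{be})$ and $G(e):=\mathcal N_m(be)$ for $e\mid r$. The relation $G(e)=\prod_{g\mid e}F(g)^{\mu(e/g)}$ holds for every $e\mid r$, not only for $e=r$. This is the key point: each divisor $e$ of $r$ is again coprime to $m$, so the same decomposition applies to $be$. Multiplicative M\"obius inversion then yields $F(r)=\prod_{e\mid r}G(e)$, which is \eqref{eq:branchfactorization}.

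To make the inversion legitimate, every $F(g)$ must be nonzero. This holds because $t_{bg}\geq3$ for $bg\geq1$, while all roots of $\Psi_m$ lie in $(-2,2)$, so $\Psi_m(t_{bg})>0$. If one prefers to avoid the inversion formalism, the direct check is just
\[
 \prod_{e\mid r}\prod_{g\mid e}F(g)^{\mu(e/g)}=\prod_{g\mid r}F(g)^{\sum_{h\mid r/g}\mu(h)},
\]
where the exponent equals $1$ exactly when $g=r$ and $0$ otherwise.

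The only step needing care is the bookkeeping that identifies the paper's $(b,n')$ decomposition of $n=be$ with the given $(b,e)$, together with the reindexing $f\leftrightarrow e/f$ and the extension from $\rad{e}$ to all divisors via $\mu=0$. There is no genuine obstacle beyond this, since the arithmetic content is entirely contained in Proposition~\ref{prop:master}.
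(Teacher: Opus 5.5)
Your proposal is correct and follows the paper's own argument. You specialize the master identity \eqref{eq:master} at $k=1$ with $n=be$ (so $n'=e$), reindex the divisors to get \eqref{eq:leadingmobius}, and then apply multiplicative M\"obius inversion on the divisor lattice of $r$; your checks that the relation holds for every $e\mid r$ and that each $\Psi_m(t_{bg})$ is positive are details the paper leaves implicit.
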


\begin{proof}
Equation \eqref{eq:leadingmobius} is the master identity
\eqref{eq:master}, applied at $t_1$ with tower ratio $br$ over the base
level $m$, after reindexing the divisors.
Ordinary multiplicative
M\"obius inversion on the divisor lattice of $r$ then gives
\eqref{eq:branchfactorization}.
\end{proof}

For the Wall--Sun--Sun question we need only the prime-to-$2Dmk$ part
of the leading edge.  Put
$\mathcal N_m^{\circ}(k):=\prod_{p\nmid2Dmk}p^{v_p(\mathcal N_m(k))}$.
By Propositions~\ref{prop:support} and~\ref{prop:valuation},
\begin{equation}\label{eq:cleanleadingedge-product}
 \mathcal N_m^{\circ}(k)
 =\prod_{\substack{p\nmid2D\\o(p)=mk}}p^{a_p}
 =\prod_{\rho_m(p)=k}p^{a_p}.
\end{equation}
Consequently, if
$E_m(k):=\mathcal N_m^{\circ}(k)/\rad{\mathcal N_m^{\circ}(k)}$, then
\[
 E_m(k)=\prod_{\rho_m(p)=k}p^{a_p-1},
 \qquad
 p\mid E_m(k)\Longleftrightarrow \rho_m(p)=k\ \text{and }a_p\geq2.
\]
Thus $E_m(k)$ isolates the primes with $a_p\geq2$ at first
occurrence, with exponent $a_p-1$.

\begin{theorem}\label{thm:primitive-orders}
For every $n>12$ there is a prime $p\nmid2D$ with $o(p)=n$. 

Equivalently, for every $m\geq3$ and $k\geq1$ with $mk>12$,
one has $\mathcal N_m^{\circ}(k)>1$.
\end{theorem}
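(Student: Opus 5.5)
The plan is to deduce Theorem~\ref{thm:primitive-orders} from the Bilu--Hanrot--Voutier primitive divisor theorem for Lehmer sequences, and to get the second formulation from \eqref{eq:cleanleadingedge-product}.

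\emph{Step 1: encode $o(p)$ by a Lehmer sequence.} Put $\alpha=\sqrt{\eps}$ and $\beta=\alpha^{-1}=\sqrt{\eps}^{\,-1}$, taking positive real square roots. Then $(\alpha+\beta)^2=t_1+2$ and $\alpha\beta=1$ are coprime rational integers. Also $\alpha/\beta=\eps$ is not a root of unity. So $(\alpha,\beta)$ is a Lehmer pair, and its Lehmer numbers are
\[
 \tilde u_n=\frac{\alpha^n-\beta^n}{\alpha-\beta}\ (n\text{ odd}),\qquad
 \tilde u_n=\frac{\alpha^n-\beta^n}{\alpha^2-\beta^2}\ (n\text{ even}).
\]
We have $\alpha^n-\beta^n=\eps^{-n/2}(\eps^n-1)$, and $(\alpha^2-\beta^2)^2=t_1^2-4$, whose square-free part is $D$ by \eqref{eq:chew}. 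Hence, for $p\nmid t_1^2-4$, the conditions $p\mid\tilde u_n$ and $\bar\eps_p^{\,n}=1$ are equivalent. Recall that a primitive prime divisor of $\tilde u_n$ is a prime $p\mid\tilde u_n$ with $p\nmid(\alpha^2-\beta^2)^2\tilde u_1\cdots\tilde u_{n-1}$. Such a $p$ therefore satisfies $p\nmid D$, $\bar\eps_p^{\,n}=1$, and $\bar\eps_p^{\,j}\neq1$ for $j<n$; that is, $o(p)=n$. It remains to rule out $p=2$. If $2\nmid t_1^2-4$, then $\bar\eps_2$ lies in a group of order at most $3$, so $o(2)\leq3<n$. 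Hence $p\nmid2D$.

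\emph{Step 2: invoke Bilu--Hanrot--Voutier.} Their theorem says that $\tilde u_n$ has a primitive divisor for all $n>30$. For $n\leq30$ it lists every $n$-defective Lehmer pair up to equivalence ($\alpha\mapsto\pm\alpha,\pm\bar\alpha$). The task is to check that no pair in their tables with $n\in\{13,\dots,30\}$ (in practice $n=13,14,15,18,24,26,30$) is equivalent to one with $\alpha\beta=1$ and $\alpha$ real. All the listed exceptional pairs are complex conjugate pairs with $\alpha\beta=|\alpha|^2\geq2$. Our pair is real with $\alpha\beta=1$, so it cannot be equivalent to any of them. This table check is the step I expect to be the main obstacle, simply because one has to read the BHV tables carefully, including the Lehmer-specific ones. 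If a cleaner route is wanted, one can note that real Lehmer pairs with $\alpha\beta=\pm1$ behave like Lucas pairs, for which the list of defective $n>12$ is short and explicit. The bound $n>12$ is exactly what excludes the small defective indices that do occur for real pairs, such as $n\leq6$ and $n=12$ for $t_1=3$.

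\emph{Step 3: the equivalent formulation.} Given $m\geq3$ and $k\geq1$ with $mk>12$, apply the first part with $n=mk$ to get $p\nmid2D$ with $o(p)=mk$. Since $o(p)\mid p-\leg{D}{p}$, we have $p\nmid mk$. By \eqref{eq:cleanleadingedge-product}, $p^{a_p}$ divides $\mathcal N_m^{\circ}(k)$, with $a_p\geq1$. Conversely, $\mathcal N_m^{\circ}(k)>1$ produces a prime with $o(p)=mk$ by the same formula. Every $n>12$ arises as $n=mk$ with $m=n\geq3$ and $k=1$, so the two statements are equivalent.
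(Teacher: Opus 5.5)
Your proof is correct. Its skeleton is the paper's:
\begin{itemize}
\item the same real Lehmer pair $(\eps^{1/2},\eps^{-1/2})$;
\item the same link $\widetilde u_n^{\,2}\cdot(\text{divisor of }t_1^2-4)=t_n-2=\eps^{-n}(\eps^n-1)^2$, which you use directly where the paper passes through $w_n=\prod_{e\mid n,\,e\geq3}\Psi_e(t_1)=\widetilde u_n$;
\item the same exclusion of $p\mid 2D$;
\item the same deduction of the second statement from \eqref{eq:cleanleadingedge-product}.
\end{itemize}

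The genuine difference is the primitive-divisor input. The paper cites the classical theorem for \emph{real} Lehmer sequences (Carmichael, Ward, Durst). That theorem already gives a primitive divisor for every $n>12$, so the threshold $12$ comes out intrinsically and no case check is needed. You instead use Bilu--Hanrot--Voutier, a much deeper result that only covers $n>30$ outright, so you must consult Voutier's tables for $13\leq n\leq 30$.

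Your reading of those tables is right: every tabulated defective pair with $n\in\{13,14,15,18,24,26,30\}$ is non-real with $|\alpha\beta|\geq2$. Two small corrections to how you phrase the check:
\begin{itemize}
\item BHV's equivalence for Lehmer pairs is $\alpha_1/\alpha_2=\beta_1/\beta_2\in\{\pm1,\pm\sqrt{-1}\}$, not $\alpha\mapsto\pm\bar\alpha$. The clean invariant is therefore $\alpha/\beta$, up to inversion. Here it equals $\eps$, which is real and $>1$, while for every tabulated pair it is non-real of modulus~$1$. Equivalently you can use the invariant $|\alpha\beta|$.
\item Your proposed Lucas shortcut does not work as stated. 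For the Lucas pair $(\eps,\eps^{-1})$, the term $u_j$ detects the order of $\bar\eps_p^{\,2}$. That cannot separate $o(p)=j$ from $o(p)=2j$ when $j$ is odd. The clean route you were looking for is exactly the Ward--Durst theorem the paper uses.
\end{itemize}

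Finally, $o(2)$ is not defined in the paper, so phrase the $p=2$ step differently. If $t_1$ is odd, then $2\mid t_1+1=\widetilde u_3$, so $2$ cannot be a primitive divisor of $\widetilde u_n$ for any $n>3$.
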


See Proposition A.2 of~\cite{BGM} for a more direct proof of this 
result in the SIC ($m=3$) case.

\begin{proof}
Put $w_n:=\prod_{\substack{e\mid n\\ e\geq3}}\Psi_e(t_1)\in\Z_{>0}$,
so that, by the factorization~\eqref{eq:bands} evaluated at $t=t_1$,
\[
 w_n^2=\frac{t_n-2}{(t_1-2)(t_1+2)^{\mathbf{1}_{2\mid n}}}.
\]
The pair $(\alpha,\beta)=(\eps^{1/2},\eps^{-1/2})$ is a real Lehmer
pair: $(\alpha+\beta)^2=t_1+2$ and $\alpha\beta=1$ are coprime
integers, and $\alpha/\beta=\eps$ is not a root of unity.  Its Lehmer
numbers are
\[
 \widetilde u_n=
 \begin{cases}
  \displaystyle\frac{\alpha^n-\beta^n}{\alpha-\beta},
       & n\ \text{odd},\\[6pt]
  \displaystyle\frac{\alpha^n-\beta^n}{\alpha^2-\beta^2},
       & n\ \text{even}.
 \end{cases}
\]
Consequently,
\[
 \widetilde u_n^2=
 \begin{cases}
  \displaystyle\frac{t_n-2}{t_1-2},
       & n\ \text{odd},\\[6pt]
  \displaystyle\frac{t_n-2}{(t_1-2)(t_1+2)},
       & n\ \text{even}.
 \end{cases}
\]
Since $\alpha>\beta>0$, comparison with the preceding factorization
gives $w_n=\widetilde u_n$.

By the primitive-divisor theorem for real Lehmer sequences~\cite{Carmichael1913, Durst1959, Ward1955},
\(\widetilde u_n\) has a primitive divisor for every \(n>12\):
a prime \(p\) such that
\[
 p\mid w_n,\qquad
 p\nmid(\alpha^2-\beta^2)^2=t_1^2-4
       =\Delta[\OK:\Z[\eps]]^2,
 \qquad
 p\nmid w_j\quad(0<j<n).
\]
Hence \(p\nmid D\).  Moreover \(p\neq2\): if \(t_1\) is even, then
\(2\mid t_1^2-4\), while if \(t_1\) is odd, then
\(2\mid w_3=\Psi_3(t_1)=t_1+1\), and \(3<n\).  Thus \(p\nmid2D\).

From $p\mid w_n$ and the
displayed square identity, $p\mid t_n-2$, so $o(p)\mid n$ by
Proposition~\ref{prop:bottom-levels}; and if $o(p)<n$, then
$p\mid t_{o(p)}-2
=w_{o(p)}^2\,(t_1-2)(t_1+2)^{\mathbf{1}_{2\mid o(p)}}$
together with $p\nmid t_1^2-4$ would force $p\mid w_{o(p)}$,
contradicting primitivity.  Hence $o(p)=n$.

For the second statement, a prime with $o(p)=mk$ automatically
satisfies $p\nmid mk$, since $mk\mid p-\leg{D}{p}$; it therefore
appears in $\mathcal N_m^{\circ}(k)$ with exponent $a_p\geq1$ by
\eqref{eq:cleanleadingedge-product}.
\end{proof}

\begin{proof}[Proof of Theorem~\ref{thm:A}]
Part~\textup{(\ref{thmA:parametrization})} was proved in
\S\ref{subsec:cross-level}, and part~\textup{(\ref{thmA:chebyshev})}
is~\eqref{eq:dimensionbands}, with the branch factors identified in
\S\ref{subsec:trace-quotient}.  Part~\textup{(\ref{thmA:support})}
is Proposition~\ref{prop:support}, Proposition~\ref{prop:valuation}
and Corollary~\ref{cor:levelsupport}; part~\textup{(\ref{thmA:orders})}
is Theorem~\ref{thm:primitive-orders}.
\end{proof}

\section{Concluding remarks}\label{sec:conclusions}
Two M\"obius structures have appeared.  Proposition~\ref{prop:leadingmobius}
gives ordinary divisor-lattice inversion in the $k$-variable. 
Indeed, after
taking logarithms, \eqref{eq:branchfactorization} becomes a
Dirichlet-convolution identity whose leading archimedean growth is
governed by the regulator, since for each fixed $m\geq3$,
\[
 \eps^{-k\varphi(m)/2}\Psi_m(t_k)\longrightarrow1
 \qquad(k\longrightarrow\infty).
\]
The composite norm relation
carries the corresponding M\"obius structure in the cyclotomic
variable, twisted by the coefficient automorphisms
$\sigma_{e^{-1}}$.  The two directions are coupled by the power
substitution $k\mapsto pk$ in
Corollary~\ref{cor:prime-step}\textup{(i)}.

If \(p\nmid2D\), fix \(\mathfrak p\mid p\mathcal O_K\) and put
\(\eta_p:=\eps^{o(p)}\).
By definition~$o(p)$ is a $p$-adic
unit and~$\eta_p\in1+\pp$.  Since $p$ is odd and
$K_{\mathfrak p}/\Q_p$ is unramified, the $p$-adic logarithm identifies
$1+\pp$ with $\pp$ and gives
\[
 v_{\pp}(\log_p\eps)=v_{\pp}(\log_p\eta_p)
 =v_{\pp}(\eta_p-1)=a_p.
\]
The same logarithm gives the local Kummer interpretation
\begin{equation}\label{eq:kummer-depth}
 a_p-1
 =\max\bigl\{j\geq0:\eta_p\in K_{\mathfrak p}^{\times p^j}\bigr\}.
\end{equation}
Thus $p$ is Wall--Sun--Sun precisely when
$\eta_p\in K_{\mathfrak p}^{\times p}$; more generally, the exponent
$a_p-1$ occurring in $E_m(k)$ at a first occurrence is exactly the
local $p$-divisibility depth of $\eps^{o(p)}$.  Write $h_K$ for the
ordinary class number of~$K$, and write $L_p(s)$ for the
Kubota--Leopoldt $p$-adic $L$-function attached to the primitive
quadratic Dirichlet character of~$K$, with the normalization of
\cite[Chapter~5]{Washington}.  The $p$-adic class number
formula~\cite{coates} and \cite[Thm.~5.24]{Washington} then gives
\[
 v_p\bigl(L_p(1)\bigr)=v_p(h_K)+a_p-1
 \qquad(p\nmid2\Delta).
\]
Hence the same integer $a_p-1$ measures both this local Kummer depth
and, up to the class number, the first nontrivial $p$-adic divisibility
of the corresponding $L$-value.

The norm relations also suggest that we should 
introduce fractional exponents of~$\eps$.
At the fixed real embedding one has $\eps>1$, so the positive root
$\eps^x$ is canonically defined for every $x\in\Q$.  Thus the exponents
are indexed by $\Q$; at a fixed prime $p$ the $p$-primary exponents lie
in $\Z[1/p]$.  

To place the positive real division points in a local algebraic
closure, fix an embedding
$\overline{\Q}\hookrightarrow\overline{\Q}_p$ inducing $\pp$, and use
the same notation for their images.  Put
\[
 \eta_{p,s}:=\eta_p^{1/p^s}=\eps^{o(p)/p^s},\qquad
 \eta_{p,0}=\eta_p,\qquad
 \eta_{p,s+1}^p=\eta_{p,s}\quad(s\geq0).
\]
Replacing this compatible chain by another one multiplies its terms by
compatible $p$-power roots of unity, and hence does not change the
compositum after adjoining $\mu_{p^\infty}$.  Put
\[
 \mathcal F_{p,\infty}
 :=K_{\mathfrak p}\bigl(\mu_{p^\infty},\eta_p^{1/p^\infty}\bigr).
\]
This is a local analogue of the standard false Tate curve extension;
compare \cite[(1.1)]{KimFalseTate} and
\cite[\S3.3]{HV03}.
Write
\[
 G_p:=\Gal(\mathcal F_{p,\infty}/K_{\mathfrak p}),\qquad
 H_p:=\Gal(\mathcal F_{p,\infty}/K_{\mathfrak p}(\mu_{p^\infty})),
\]
and
\[
 \Gamma_p:=\Gal(K_{\mathfrak p}(\mu_{p^\infty})/K_{\mathfrak p}).
\]
Since $K_{\mathfrak p}/\Q_p$ is unramified whereas the cyclotomic
tower is totally ramified, $\Gamma_p\simeq\Z_p^\times$.  
Passing to the limit in the Kummer direction, $H_p\simeq\Z_p$, and the standard false
Tate description is
\[
 G_p\simeq H_p\rtimes_{\chi_p}\Gamma_p
 \simeq \Z_p\rtimes_{\chi_p}\Z_p^\times.
\]
Here $H_p$ is the normal Kummer subgroup and $\Gamma_p$ acts through
the cyclotomic character: if $\kappa\in H_p$ and
$\gamma\in\Gamma_p$, 
then~$\gamma\kappa\gamma^{-1}=\kappa^{\chi_p(\gamma)}$.

We introduce some abbreviated notation: write $\zeta_r:=\zeta_{p^r}$ and, for $r\geq1$ and $s\geq0$, put
\[
 F_{r,s}:=K_{\mathfrak p}(\zeta_r,\eta_{p,s}) \textrm{\ \ and\ \ }
 c^{(p)}_{r,s}:=1-\zeta_r\eta_{p,s}.
\]
The Chebyshev shift of Corollary~\ref{cor:prime-step}\textup{(i)} disappears on
this divided grid in a particularly symmetric way, as we now show. 
Bear in mind that the subscripts~$r$ and~$s$ in the following really represent 
respectively~$p^r$ and~$p^s$ (with the obvious extensions). 

\begin{proposition}\label{prop:false-tate-norms}
For $r\geq1$ and $s\geq0$ one has
\begin{equation}\label{eq:false-tate-resultants}
 \Res_Y(Y^p-\zeta_r,1-Y\eta_{p,s+1})
 =\Res_Y(Y^p-\eta_{p,s},1-\zeta_{r+1}Y)
 =c^{(p)}_{r,s}.
\end{equation}
Whenever the two adjacent extensions have their full degree $p$, this
is the pair of field-norm identities
\begin{equation}\label{eq:false-tate-norms}
 \Nm_{F_{r+1,s+1}/F_{r,s+1}}(c^{(p)}_{r+1,s+1})
 =\Nm_{F_{r+1,s+1}/F_{r+1,s}}(c^{(p)}_{r+1,s+1})
 =c^{(p)}_{r,s}.
\end{equation}
\end{proposition}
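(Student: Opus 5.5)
The plan is to compute both resultants directly from their roots, using the monic-resultant formula, and then read off the field norms as products over conjugates. The one structural ingredient is that $\zeta_p$ lies in every field $F_{r,s}$ with $r\geq1$, because the roots of unity are power-compatible.

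\emph{Step 1: the resultants.} For a monic $f$ of degree $p$ with roots $y_0,\dots,y_{p-1}$ and any $g$, I will use the convention
\[
\Res_Y(f,g)=\prod_{j} g(y_j).
\]
With linear $g$ this agrees with the usual normalization, since the leading coefficient of $f$ is~$1$.
\begin{itemize}
\item For $f=Y^p-\zeta_r$, power-compatibility gives $\zeta_{r+1}^p=\zeta_r$, so the roots are $\zeta_{r+1}\zeta_p^j$ for $j\bmod p$.
\item For $f=Y^p-\eta_{p,s}$, the relation $\eta_{p,s+1}^p=\eta_{p,s}$ gives the roots $\eta_{p,s+1}\zeta_p^j$.
\end{itemize}
In both cases the product of the values of the linear factor is
\[
\prod_{j\bmod p}\bigl(1-\zeta_p^{\,j}\,\zeta_{r+1}\eta_{p,s+1}\bigr).
\]
This is exactly the same product that appeared in the proof of Lemma~\ref{lem:CycNId}\textup{(\ref{lem:CycNDeep})}. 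So, as there, I will apply the identity $\prod_{j}(1-\zeta_p^jZ)=1-Z^p$ with $Z=\zeta_{r+1}\eta_{p,s+1}$. Since $Z^p=\zeta_r\eta_{p,s}$, both resultants equal $1-\zeta_r\eta_{p,s}=c^{(p)}_{r,s}$, which is \eqref{eq:false-tate-resultants}.

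\emph{Step 2: the norms.} First I will note that $F_{r+1,s+1}=F_{r,s+1}(\zeta_{r+1})=F_{r+1,s}(\eta_{p,s+1})$. I will also note that $\zeta_p\in F_{r,s+1}$ and $\zeta_p\in F_{r+1,s}$, since $r\geq1$.

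Consider the vertical step $F_{r+1,s+1}/F_{r,s+1}$ and assume it has full degree $p$. Then $Y^p-\zeta_r$ is the minimal polynomial of $\zeta_{r+1}$ over the base field. The base contains $\mu_p$, so the extension is Kummer and the embeddings send $\zeta_{r+1}\mapsto\zeta_{r+1}\zeta_p^j$. Every such embedding fixes $\eta_{p,s+1}$, which lies in the base. Hence the norm of $c^{(p)}_{r+1,s+1}$ is the product of the values of $1-Y\eta_{p,s+1}$ at the roots of $Y^p-\zeta_r$, which is the first resultant.

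The horizontal step $F_{r+1,s+1}/F_{r+1,s}$ is symmetric. Here $Y^p-\eta_{p,s}$ is the minimal polynomial of $\eta_{p,s+1}$, and $\zeta_{r+1}$ lies in the base. This gives the second resultant. Combining with Step~1 yields \eqref{eq:false-tate-norms}.

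\emph{Expected obstacle.} The only delicate points are bookkeeping. One is the sign convention for the resultant; it causes no trouble because $f$ is monic and $g$ is linear. The other is the claim that full degree $p$ forces the minimal polynomial to be the displayed binomial. That holds because the element is a root of the binomial of degree $p$, so equality of degrees gives irreducibility. Everything else follows from the same root-of-unity product identity already used in Lemma~\ref{lem:CycNId}.
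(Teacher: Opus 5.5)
Your proposal is correct and is essentially the paper's proof. You list the roots $\xi\zeta_{r+1}$ and $\xi\eta_{p,s+1}$ (with $\xi^p=1$), collapse the common product via $\prod_{\xi^p=1}(1-\xi Z)=1-Z^p$ at $Z=\zeta_{r+1}\eta_{p,s+1}$, and identify each product with the relative norm when that step has degree $p$; your added bookkeeping (irreducibility of the binomial from the degree count, and the other generator lying in the base) only makes explicit what the paper leaves implicit, though note that under the section's abbreviation $\zeta_r:=\zeta_{p^r}$ your symbol $\zeta_p$ would literally read as $\zeta_{p^p}$, so write $\zeta_1$ or $\xi$ instead.
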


\begin{proof}
The roots of $Y^p-\zeta_r$ are $\xi\zeta_{r+1}$ and those of
$Y^p-\eta_{p,s}$ are $\xi\eta_{p,s+1}$, with $\xi^p=1$.  Thus either
resultant in \eqref{eq:false-tate-resultants} 
is~$\prod_{\xi^p=1}(1-\xi\zeta_{r+1}\eta_{p,s+1}) =1-\zeta_r\eta_{p,s}$. 
If the corresponding adjunction has degree $p$, these products are
exactly the two relative field norms.
\end{proof}

Equation~\eqref{eq:kummer-depth} is a statement about
local division: it says that $a_p-1$ successive $p$-divisions of
$\eta_p$ can be made inside $K_{\mathfrak p}$.  These divisions need
not be the images of the positive real roots $\eta_{p,s}$ chosen
above.  Equivalently, choose $\beta_p\in K_{\mathfrak p}^\times$ with
$\beta_p^{p^{a_p-1}}=\eta_p$ and
$\beta_p\notin K_{\mathfrak p}^{\times p}$.  Then
\[
 K_{\mathfrak p}\bigl(\mu_{p^\infty},\eta_p^{1/p^\infty}\bigr)
 =K_{\mathfrak p}\bigl(\mu_{p^\infty},\beta_p^{1/p^\infty}\bigr),
\]
so $a_p-1$ is exactly the number of initial local Kummer divisions of
$\eta_p$ which already exist in $K_{\mathfrak p}$.  
In particular, as is well known, the 
Wall--Sun--Sun condition is exactly the
vanishing of the first local Kummer class
$[\eta_p]\in K_{\mathfrak p}^{\times}/K_{\mathfrak p}^{\times p}$.

Thus the integral substitution $k\mapsto pk$ is replaced, after
adjoining compatible division points, by two equal partial norms in a
genuine Kummer--cyclotomic square.  The resultant identity
\eqref{eq:false-tate-resultants} remains valid without a degree
hypothesis; the field-norm interpretation \eqref{eq:false-tate-norms}
applies precisely at the non-collapsed squares.  Together with
\S\ref{sec:leading-edge}, this places exact-order extraction and local
Kummer depth in the same two-variable framework. 
The fields \(F_{r,s}\) are finite layers of the local false Tate
extension \(\mathcal F_{p,\infty}/K_{\mathfrak p}\).  
This points at a future interpolation of the elements \(c^{(p)}_{r,s}\).

\section*{Acknowledgements}
I would like to thank Myungshik Kim, Terry Rudolph and the 
QOLS Group at Imperial College for their ongoing hospitality. 

In addition, I acknowledge assistance from the generative-AI systems
Anthropic's Claude Fable~5 and OpenAI's ChatGPT: the former principally
with exploratory computations, statistical analyses and an early
$q$-cyclotomic draft, and the latter principally with checking the
mathematical arguments and with reorganising and editing the material
into the present manuscript.  I take full responsibility for all
mathematical claims, computations and the final text.

\appendix
\setcounter{section}{0}
\section[Wall--Sun--Sun primes for small real quadratic fields]{\texorpdfstring{Wall--Sun--Sun primes~$p<1.7415\times10^{13}\text{ for }\Q(\qD),\ 1<D<50,\ p\nmid2D$}
{Wall--Sun--Sun primes for small real quadratic fields}}
\label{app:wss-tables}

The following are the results of an exhaustive search for WSS primes
$p<1.7415\times10^{13}$ in the first~30 real quadratic fields.  Since
$o(p)\mid p-\leg{D}{p}$ and
$p\nmid\bigl(p-\leg{D}{p}\bigr)/o(p)$,
Proposition~\ref{prop:bottom-levels} gives~$
 v_p\!\left(t_{p-\leg{D}{p}}-2\right)=2a_p$. 
Thus the exact Wall--Sun--Sun condition is~$
t_{p-\leg{D}{p}}\equiv2\pmod{p^4}$. 
But because the valuation is even, reduction modulo $p^3$ already detects
all candidates.

\begin{center}
\begingroup
\scriptsize
\renewcommand{\arraystretch}{1.12}
\setlength{\arraycolsep}{2.0pt}
\(
\begin{array}{r|r|r|l@{\hspace{6pt}}r|r|r|l}
 D & \Delta & t_1 & p & D & \Delta & t_1 & p\\
\hline
 2 & 8 & 6 & 13,\ 31,\ 1546463
   & 26 & 104 & 102 & 2683,\ 3967,\ 18587\\
 3 & 12 & 4 & 103,\ 2297860813
   & 29 & 29 & 27 & 3^{\dagger},\ 11\\
 5 & 5 & 3 & \text{---}
   & 30 & 120 & 22 & \text{---}\\
 6 & 24 & 10 & 7,\ 523,\ 4398403538003
   & 31 & 124 & 3040 & 157,\ 261687119\\
 7 & 28 & 16 & 1347680707,\ 31678908887
   & 33 & 33 & 46 & 29,\ 37,\ 6713797\\
 10 & 40 & 38 & 191,\ 643,\ 134339,\ 25233137
    & 34 & 136 & 70 & 37,\ 547,\ 4733\\
 11 & 44 & 20 & \text{---}
    & 35 & 140 & 12 & 23,\ 577,\ 1325663\\
 13 & 13 & 11 & 241
    & 37 & 37 & 146 & 7,\ 89,\ 257,\ 631,\ 18595587053\\
 14 & 56 & 30 & 6707879,\ 93140353,\ 1498255181
    & 38 & 152 & 74 & 5\\
 15 & 60 & 8 & 181,\ 1039,\ 2917,\ 2401457,\ 10521218089
    & 39 & 156 & 50 & 5,\ 7,\ 37,\ 163409,\ 795490667\\
 17 & 17 & 66 & \text{---}
    & 41 & 41 & 4098 & 29^{\dagger},\ 53,\ 7211,\ 8456649193033\\
 19 & 76 & 340 & 79,\ 1271731,\ 13599893,\ 31352389,\ 10895901685667
    & 42 & 168 & 26 & 5,\ 43,\ 71,\ 22907114557\\
 21 & 21 & 5 & 46179311
    & 43 & 172 & 6964 & 3,\ 479\\
 22 & 88 & 394 & 43,\ 73,\ 409,\ 28477
    & 46 & 184 & 48670 & 70339124081\\
 23 & 92 & 48 & 7,\ 733
    & 47 & 188 & 96 & 5762437\\
\end{array}
\)
\endgroup
\end{center}
All of the examples have $a_p=2$, other than those
marked~$^\dagger$, where $a_p=3$.

\section{Computational tests for the Wall--Sun--Sun condition}
\label{app:null-tests}
The constructions of this paper detect the condition \(a_p\geq2\),
and determine its exact valuation, but do not predict when it occurs.
We therefore tested both the event \(a_p\geq2\) and the excess
\(a_p-1\) for dependence on several simpler quantities: the regulator
\(\log\eps\) or~$\frac{1}{2}\log\eps$; the splitting type of \(p\), including comparisons at
fixed order \(o(p)\); the archimedean height of the unit; height
quotients
\[
 \frac{\eps_{DE}}{\eps_D\eps_E}
\]
in biquadratic fields \(\Q(\qD,\sqrt E)\), where \(\eps_F\)
denotes the chosen totally positive unit of \(\Q(\sqrt F)\); and
numerical functionals arising from hyperbolic-geometric dilogarithm
identities.

The statistical corpus used the subrange \(p<10^{13}\) of the
exhaustive searches reported in Appendix~\ref{app:wss-tables}; a
separate, more detailed set of tests for \(K=\Q(\sqrt2)\) and
\(p<10^{10}\);
approximately \(4\times10^8\) certified discriminant--prime records,
including every square--free \(D<10^8\) at four fixed primes~$5,7,13,31$ 
and every~\(D<10^7\) at thirty-six further primes \(p\leq3001\);
and \(902{,}688\) biquadratic triples whose unit heights span two
orders of magnitude.

Across these data sets we found no reproducible evidence that the
occurrence or size of the Wall--Sun--Sun excess depends on any of the
quantities tested.  The natural benchmark is an incidence of order
\(1/p\), corresponding heuristically to one further independent
congruence modulo \(p\).  The observed frequencies were compatible
with this benchmark, while models incorporating the regulator, the
unit height or the splitting data produced no stable improvement.
The particular hyperbolic-geometric quantities tested likewise gave
no detectable separation between the cases \(a_p=1\) and
\(a_p\geq2\).

These conclusions are empirical and are restricted to the stated
statistics and computational ranges; in particular, they do not
exclude the existence of subtler arithmetic predictors.  By the
\(p\)-adic class-number formula in \S\ref{sec:conclusions}, when
\(p\nmid h_K\) one has
\[
 a_p\geq2
 \quad\Longleftrightarrow\quad
 v_p\bigl(L_p(1)\bigr)\geq1.
\]
Thus the same computations are compatible with the heuristic that
the first relevant \(p\)-adic digit of \(L_p(1)\) behaves randomly
across the tested fields.  No distribution theorem is asserted here.
The underlying data, programs and analysis logs are archived with
the author.

\section{The case
\texorpdfstring{$K\subseteq\Q(\mu_m)$}
{K contained in a cyclotomic field}}\label{app:intersection}

Theorem~\ref{thm:main} was stated under the generic hypothesis
$K\cap\Q(\mu_{mn})=\Q$, so that the coefficient automorphisms
$\sigma_{e^{-1}}$ fix $K$.  If instead $K\subseteq\Q(\mu_m)$, then
\[
 K(\mu_m)=\Q(\mu_m),\qquad K(\mu_{mn})=\Q(\mu_{mn}),
\]
and Lemma~\ref{lem:CompN} may be evaluated directly at
$X=\eps^k$.  Thus
\[
 \Nm_{K(\mu_{mn})/K(\mu_m)}(c_{mn,k})
 =
 \prod_{e\mid\rad{n'}}
 \bigl(1-\zeta_m^{e^{-1}}\eps^{(n/e)k}\bigr)^{\mu(e)}.
\]
This is the same formula as in Theorem~\ref{thm:main}, except that
its factors should not in general be written as
$\sigma_{e^{-1}}(c_{m,(n/e)k})$.

In particular, if $m\geq2$, $(m,\ell)=1$, and
$K\subseteq\Q(\mu_m)$, then
\[
 \Nm_{K(\mu_{m\ell})/K(\mu_m)}(c_{m\ell,k})
 =
 \frac{1-\zeta_m\eps^{\ell k}}
      {1-\zeta_m^{\ell^{-1}}\eps^k}.
\]
No modification is needed in
Corollary~\ref{cor:prime-step}\textup{(i)}.  The integer identities
of Proposition~\ref{prop:master} and all subsequent
results are unchanged.  The remaining possibility, in which $K$
first enters the cyclotomic tower between the two levels under
consideration, is not needed here.

\end{document}